\newcommand{\R}{\mathbb{R}}
\newcommand{\ra}{\rightarrow}
\newcommand{\pum}{pure$^-$\ }
\newcommand{\pup}{pure$^+$\  }
\newcommand{\bes}{\begin{eqnarray*}}
	\newcommand{\ees}{\end{eqnarray*}}
\newcommand{\beq}{\begin{eqnarray}}
	\newcommand{\eeq}{\end{eqnarray}}
\theoremstyle{plain}
\newtheorem{theorem}{Theorem}[section]
\newtheorem{proposition}[theorem]{Proposition}
\newtheorem{lemma}[theorem]{Lemma}
\newtheorem{conjecture}{Conjecture}
\newtheorem{corollary}[theorem]{Corollary}
\theoremstyle{definition}
\newtheorem{definition}[theorem]{Definition}
\newtheorem{example}{Example}
\newtheorem{remark}[theorem]{Remark}
\title{Eschers and Stanley's chromatic e-positivity conjecture in length-2}
\author{Alexandre Rok}
\address{Section de mathématiques, Université de Genève}
\email{Alexandre.Rok@unige.ch}
\author{Andras Szenes}
\address{Section de mathématiques, Université de Genève}
\email{Andras.Szenes@unige.ch}
\begin{document}	
	\begin{abstract} 
		We give a short proof of the chromatic e-positivity conjecture of 
		Stanley for length-2 partitions.
	\end{abstract}
	
	\maketitle
	
	\section{Introduction}\label{sec:intro}
	
	Let $G$ be a finite graph, $V(G)$ - the set of vertices of $G$, $E(G)$ - 
	the set of edges of $G$. 
	
	\begin{definition} \label{coloring} A {\em proper coloring} $c$ of  $G$ is
		a map $$c:V\rightarrow\mathbb{N}$$ such that no two adjacent
		vertices are colored in the same color.
	\end{definition}
	
	For each coloring $c$ we define a monomial $$x^c = \prod_{v\in
		V}x_{c(v)},$$ where $x_1, x_2, ..., x_n,...$ are commuting
	variables.  We denote by $\Pi(G)$ the set of all proper colorings
	of $G$, and by  $\Lambda$ the ring of symmetric functions in the infinite 
	set
	of variables $\{x_1, x_2,...\}.$

	In \cite{Stanley95a}, Stanley defined the chromatic symmetric function of a 
	graph as follows.
	
	\begin{definition} \label{chromfunction} The \em chromatic symmetric
		function \normalfont $X_G\in\Lambda$ of a graph $G$ is the sum of the 
		monomials $x^c$
		over all proper colorings of
		$G$: $$X_G=\sum\limits_{c\in\Pi(G)}x^c.$$
	\end{definition} 
		\begin{remark}
We could equally consider a finite, but sufficiently large number colors $ r $. In this case, we would have $ X_G\in\Lambda_r $, the set of symmetric polynomials in $ r $ variables. 
This would somewhat complicate the notation, but would not change any of our results.
\end{remark}
	
	\begin{definition} \label{efunc} Denote by $e_m$ the $m$-th elementary
		symmetric function:
		$$e_m = \sum\limits_{i_1<i_2<...<i_m}x_{i_1}\cdot
		x_{i_2}\cdot...\cdot x_{i_m},$$
		where $i_1,..,i_k\in \mathbb{N}$.  Given a non-increasing sequence of
		positive integers (we will call these {\em partitions})
		$$\lambda = (\lambda_1\geq \lambda_2\geq...\geq\lambda_k),\ \lambda_i\in
		\mathbb{N},$$
		we define the \textit{elementary symmetric} function
		$e_{\lambda} = \prod\limits_{i=1}^k e_{\lambda_i}.$ These functions
		form a basis of $\Lambda.$
	\end{definition}

	\begin{definition} \label{epos} A symmetric function $X\in \Lambda$ is \em
		$e$-positive \normalfont if it has non-negative coefficients in the
		basis of the elementary symmetric functions.
	\end{definition}
	
	\begin{definition} \label{pfunc} Denote by $p_m$ the $m$-th power sum
		symmetric function: $$p_m =
		\sum\limits_{i\in\mathbb{N}}x^m_{i}.$$ Given a
		partition $\lambda = (\lambda_1\geq \lambda_2\geq...\geq\lambda_k)$, we 
		define the power sum
		symmetric function
		$p_{\lambda} = \prod\limits_{i=1}^k p_{\lambda_i}.$ These functions also
		form a basis of $\Lambda.$
	\end{definition}
	
	\begin{definition} \label{mfunc}
		Given a partition $\lambda = (\lambda_1\geq 
		\lambda_2\geq...\geq\lambda_k)$,  we define the monomial symmetric 
		function 
		$$m_\lambda=\sum\limits_{i_1<i_2<...<i_k}\sum\limits_{\lambda'\in 
		S_k(\lambda)}x_{i_1}^{\lambda_{1}'}\cdot
		x_{i_2}^{\lambda_{1}'}\cdot...\cdot x_{i_k}^{\lambda_{k}'},$$
		where the inner sum is taken over the set of all permutations of the 
		sequence $\lambda$, denoted by $S_k(\lambda)$.
	\end{definition}

	\begin{example}
		The chromatic symmetric function of $K_n$, the complete graph on $n$
		vertices, is  $X_{K_n} = n!\,e_n$; in particular, $X_{K_n}$ is $e$-positive.
	\end{example}
	
	\begin{definition} \label{incgraph} For a poset $P$, the \em incomparability
		graph\normalfont, $\textnormal{inc}(P)$, is the graph with elements
		of $P$ as vertices, where two vertices are connected if and only if
		they are not comparable in $P$.
	\end{definition} 
	
	\begin{definition} \label{nplusmfree} Given a pair of natural numbers
		$a,b\in\mathbb{N}^2$, we say that a poset $P$ is \em (a+b)-free
		\normalfont if it does not contain a length-$a$ and a length-$b$
		chain, whose elements are  incomparable.
	\end{definition} 
	
	\begin{definition} A unit interval order (UIO) is a partially ordered set
		which is isomorphic to a finite subset of $U\subset\R$ with the 
		following poset structure:
		\[ \text{for } u,w\in U:\    u\succ w \text{ iff } u\ge w+1.
		\]
		Thus $u$ and $w$ are incomparable precisely when $|u-w|<1$ and we will
		use the notation $u\sim w$ in this case. 
	\end{definition}
	\begin{theorem}[Scott-Suppes \cite{Scott-Suppes54}]\label{S_S}
		A finite poset $P$ is a UIO if and only if it is $(2+2)$- and 
		$(3+1)$-free.
	\end{theorem}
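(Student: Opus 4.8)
I would prove the two implications separately. The forward one is a short computation with inequalities; the reverse one --- constructing a unit-interval embedding --- carries all the weight. For the forward direction, realize $P$ as a finite subset of $\R$. If $P$ contained a $(3+1)$, i.e.\ a chain $a\succ b\succ c$ together with an element $d$ incomparable to each of $a,b,c$, then $a\ge b+1\ge c+2$ would give $a-c\ge 2$, while $|a-d|<1$ and $|d-c|<1$ force $a-c<2$. If $P$ contained a $(2+2)$, i.e.\ chains $a\succ b$ and $c\succ d$ with the remaining four pairs incomparable, then $|a-d|<1$ together with $a\ge b+1$ gives $d>b$, hence $c\ge d+1>b+1$, contradicting $|b-c|<1$. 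So neither configuration can occur.

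For the reverse direction, assume $P$ is $(2+2)$- and $(3+1)$-free, and for $x\in P$ write $D(x)=\{z:z\prec x\}$ and $U(x)=\{z:x\prec z\}$. My first step is that $(2+2)$-freeness forces $\{D(x):x\in P\}$ to be linearly ordered by inclusion: if $D(x)$ and $D(y)$ were incomparable, I would pick $a\in D(x)\setminus D(y)$ and $b\in D(y)\setminus D(x)$ and check, using only transitivity, that each of $a,x$ is incomparable to each of $b,y$; then $\{a,x\}$ and $\{b,y\}$ would be two incomparable $2$-chains, i.e.\ a $(2+2)$. Since being $(2+2)$-free passes to the opposite poset, $\{U(x):x\in P\}$ is a chain as well.

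The crux is the next step, where $(3+1)$-freeness enters: \emph{if $D(x)\subsetneq D(y)$, then $U(x)\supseteq U(y)$.} If not, then, the up-sets being a chain, $U(x)\subsetneq U(y)$, so there are $a\in D(y)\setminus D(x)$ and $b\in U(y)\setminus U(x)$; then $a\prec y\prec b$ is a $3$-chain and $x$ is incomparable to each of $a,y,b$ --- for instance $x\prec a$ would force $x\prec y$ and hence $U(y)\subseteq U(x)$, against $U(x)\subsetneq U(y)$, and the other cases are analogous --- so we would get a $(3+1)$. Granting this, the relation $x\lesssim y$ meaning ``$D(x)\subseteq D(y)$ and $U(x)\supseteq U(y)$'' is a total preorder whose equivalence classes --- elements with the same down-set and the same up-set --- form a finite chain $C_1,\dots,C_k$. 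One checks that $\prec$ descends to the quotient, that $\{j:C_i\prec C_j\}$ is an up-set of $\{1,\dots,k\}$ that shrinks as $i$ grows, and that $C_i\prec C_j$ implies $i<j$; hence $C_i\prec C_j\iff j\ge\nu(i)$ for a non-decreasing function $\nu$ with $\nu(i)>i$.

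It remains to choose reals $f_1<\dots<f_k$ with $f_{\nu(i)}>f_i+1>f_{\nu(i)-1}$ for all $i$ (with the obvious conventions at the ends), and then to perturb the elements inside each class $C_i$ by distinct small amounts near $f_i$; the resulting map $\iota\colon P\to\R$ realizes $P$ as a finite subset of $\R$ on which $x\prec y$ is equivalent to $\iota(x)+1\le\iota(y)$, i.e.\ as a UIO. I expect this last step to be the main obstacle: the naive interval representation one reads off from the chain of down-sets (sending $x$ to an interval whose left endpoint is the rank of $D(x)$) in general uses intervals of unequal length, and $(3+1)$-freeness --- through the implication above --- is precisely what prevents one such interval from being nested inside another, which is the obstruction to rescaling everything to unit length. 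Verifying that the system $f_1<\dots<f_k$, $f_{\nu(i)}>f_i+1>f_{\nu(i)-1}$ is feasible is the numerical heart of the argument, and is where I would take the most care.
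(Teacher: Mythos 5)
The paper does not prove this statement at all: it is quoted as the classical Scott--Suppes representation theorem and cited from the literature, so there is no in-paper argument to compare against. Your proof is the standard one for this theorem (essentially the Scott--Suppes/Fishburn argument), and all the steps you carry out in detail are correct: the forward direction via the triangle inequality, the $(2+2)$-freeness forcing the down-sets $D(x)$ (and dually the up-sets $U(x)$) to be linearly ordered by inclusion, the use of $(3+1)$-freeness to show that $D(x)\subsetneq D(y)$ implies $U(x)\supseteq U(y)$, and the reduction to a non-decreasing threshold function $\nu$ on the chain of classes $C_1,\dots,C_k$ with $\nu(i)>i$.

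The one place you stop short is the step you yourself flag: you assert that reals $f_1<\dots<f_k$ with $f_{\nu(i)}>f_i+1>f_{\nu(i)-1}$ exist but do not verify feasibility. This is not a fatal gap --- the system is indeed always solvable --- but since you identify it as the ``numerical heart,'' you should close it. A clean way is induction on $k$: having placed $f_1<\dots<f_{k-1}$ consistently with the restricted constraints, let $a$ be the largest index with $\nu(a)\le k$ (so, by monotonicity, $\nu(i)\le k$ exactly for $i\le a$ and $\nu(i)=k+1$ for $i>a$); you must choose $f_k$ in the open interval $\bigl(\max(f_{k-1},\,f_a+1),\ f_{a+1}+1\bigr)$ (with the obvious conventions when $a=0$ or $a+1>k-1$). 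This interval is nonempty because $f_a<f_{a+1}$ and because the inductive hypothesis applied to the pair $(a+1,k-1)$ --- legitimate since $\nu(a+1)=k+1>k-1$ --- gives $f_{k-1}<f_{a+1}+1$. With that supplied, and with the observation that the generic small perturbations within each class preserve all strict inequalities and keep same-class elements incomparable, your argument is complete.
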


	Stanley~\cite{Stanley95a} initiated the study of incomparability
	graphs of $(3+1)$-free partially ordered sets. Analyzing the chromatic
	symmetric functions of these incomparability graphs,
	Stanley~\cite{Stanley95a} stated the following positivity conjecture.
	
	\begin{conjecture}[Stanley] \label{eposconj}
		If $P$ is a $(3+1)$-free poset, then $X_{\textnormal{inc}(P)}$ is 
		$e$-positive.
	\end{conjecture}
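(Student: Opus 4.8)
The plan is to reduce the full conjecture to the case of unit interval orders and then to attack $e$-positivity through the power-sum basis by a cancellation argument. First I would reduce to UIOs. By Theorem \ref{S_S} the UIOs are exactly the $(2+2)$- and $(3+1)$-free posets, so in particular every UIO is $(3+1)$-free. Conversely, by the reduction of Guay-Paquet, the chromatic symmetric function of the incomparability graph of an arbitrary $(3+1)$-free poset lies in the convex cone spanned by those of UIOs; equivalently, $X_{\mathrm{inc}(P)}$ satisfies a modular linear relation expressing it as a nonnegative combination of functions $X_{\mathrm{inc}(P')}$ with $P'$ ranging over UIOs. Since the set of $e$-positive functions is a cone closed under nonnegative combinations, it suffices to establish Conjecture \ref{eposconj} when $P$ is a UIO. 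From now on I fix such a $P$, realized as a finite set $U\subset\mathbb{R}$ with $u\succ w$ iff $u\ge w+1$.

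Second, I would pass to the power-sum basis. The transition from the $p$-basis to the $e$-basis is explicit, so proving $[e_\lambda]X\ge 0$ for every partition $\lambda$ is equivalent to controlling an alternating sum of the coefficients $[p_\mu]X$. For UIOs these power-sum coefficients admit a clean combinatorial model: each is a signed count of tuples of cyclic proper colorings --- the \emph{Eschers} of the title --- where a single Escher of length $\ell$ encodes a cyclic sequence of vertices with consecutive entries incomparable (i.e.\ $u\sim w$), and contributes to $p_\ell$. The central task is then to reorganize the resulting signed sum over tuples of Eschers so that, after cancellation, what survives is a manifestly nonnegative count identifiable with an $e_\lambda$-contribution.

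Third --- the crux --- I would construct a sign-reversing involution on the set of Escher-tuples whose fixed points form a family of ``positive'' configurations, thereby forcing $[e_\lambda]X\ge 0$. For partitions of length two this cancellation can be carried out by pairing an Escher-tuple with the one obtained from a single local move, namely splitting or merging two cyclic blocks at a compatible incomparable pair of vertices; the fixed points then biject with genuine $e_\lambda$-terms. Establishing that this move is a well-defined sign-reversing involution with the predicted fixed-point set, and that the fixed points account exactly for the $e$-coefficient, is the technical heart of the length-$2$ case and is the content of the present paper.

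The main obstacle is precisely the passage from length two to arbitrary length. With three or more cyclic blocks the naive local move ceases to be an involution: merging or splitting one pair of blocks perturbs the $\sim$-constraints governing the remaining blocks, so the cancellation graph acquires cycles and higher-valence vertices that resist a simple pairing. Designing an involution --- or a more refined inclusion--exclusion --- that simultaneously respects all the pairwise incomparability relations among $\ge 3$ blocks is the hard step, and it is exactly this difficulty that keeps Conjecture \ref{eposconj} open in general; the length-$2$ theorem is best read as the first, already nontrivial, instance of this program.
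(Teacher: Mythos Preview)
The statement you are trying to prove is a \emph{conjecture}; the paper does not prove it in general, only for partitions of length~$2$. Your proposal is accordingly not a proof either: it is a program sketch, and you explicitly concede at the end that the general case remains open. So as a proof of Conjecture~\ref{eposconj} it has a genuine gap---namely, the entire passage from length~$2$ to arbitrary length, which you correctly identify as the obstacle but do not resolve.

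Restricting to the length-$2$ case, your outline is in the right spirit but diverges from the paper in two concrete ways. First, the paper does not work directly with the alternating $p$-to-$e$ transition and a sign-reversing involution on Escher tuples. Instead it passes through Stanley's $G$-homomorphism (Theorem~\ref{poscrit} and Remark~\ref{c_m}) to reduce $e$-positivity to the monomial positivity of $m_\lambda^U$, and then uses the single identity $m_{nk}=p_np_k-p_{n+k}$ to reduce to the inequality $\#P_n^U\cdot\#P_k^U\ge\#P_{n+k}^U$. Second, this inequality is proved not by an involution with fixed points, but by an explicit \emph{injection} $\phi:P_{n+k}^U\hookrightarrow P_n^U\times P_k^U$ together with a one-sided inverse $\psi$; the technical core (Propositions~\ref{mainprop} and~\ref{excepprop}) is the verification that $\psi\circ\phi=\mathrm{id}$, which hinges on the structural dichotomy for ``pure'' sequences (Corollary~\ref{purecor}) and the impossibility arguments of Lemma~\ref{funlemma}. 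Your ``split/merge at an incomparable pair'' picture is morally close to the maps $\phi,\psi$, but note that Eschers are not sequences of pairwise incomparable consecutive entries: the relation is $w_i\to w_{i+1}$, meaning $w_i\not\succ w_{i+1}$, which allows $w_i\prec w_{i+1}$ as well as $w_i\sim w_{i+1}$. Getting this right is essential, since the impossibility arguments that drive the proof count $\prec$ versus $\to$ symbols.
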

	
	For a graph $G$ let us denote by ${c_\lambda}(G)$ the coefficients of $X_G$ 
	with respect to the $e$-basis. We omit the index $G$ whenever this causes 
	no confusion. The conjecture thus states that in the expansion
	$$X_G=\sum\limits_{\lambda}c_{\lambda}e_\lambda,$$
for the case when $ G $ is the incomparability graph of a $(3+1)$-free poset, 
the coefficients $c_{\lambda}  $ are nonnegative for all partitions $ \lambda $.

	Conjecture~\ref{eposconj} has been verified with the help of computers
	for up to 20-element posets~\cite{Guay-Paquet13}. In 2013,
	Guay-Paquet~\cite{Guay-Paquet13} showed that to prove this conjecture,
	it would be sufficient to verify it for the case of $(3+1)$- and
	$(2+2)$-free posets, i.e. for unit interval orders (see Theorem~\ref{S_S}). 
	More precisely:
	
	\begin{theorem}[Guay-Paquet]\label{G_P}
		Let $P$ be a $(3+1)$-free poset. Then, $X_\mathrm{inc}(P)$ is a
		convex combination of the chromatic symmetric
		functions $$\{X_\mathrm{inc}(P')\ |\ P'\ \mathrm{is}\ \mathrm{a}\
		(3+1)\mathrm{-}\ \mathrm{and}\ (2+2)\mathrm{-free}\ \mathrm{poset}
		\}.$$
		
	\end{theorem}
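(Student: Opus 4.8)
The plan is to remove the $(2+2)$-configurations of $P$ one at a time: if $P$ is $(3+1)$-free but not $(2+2)$-free, I would rewrite $X_{\mathrm{inc}(P)}$ as a convex combination of chromatic symmetric functions of $(3+1)$-free posets that are closer to being unit interval orders, and then iterate. By Theorem~\ref{S_S} the recursion terminates once no $(2+2)$ is left, i.e.\ at unit interval orders.

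The engine of each step is a modular relation for chromatic symmetric functions. Fix a graph $H$ on a vertex set $V$ and three vertices $x,y,z$ such that none of the pairs $e_1=\{x,y\}$, $e_2=\{y,z\}$, $e_3=\{x,z\}$ is an edge of $H$. Starting from the power-sum expansion $X_G=\sum_{S\subseteq E(G)}(-1)^{|S|}p_{\lambda(S)}$, where $\lambda(S)$ records the component sizes of the spanning subgraph $(V,S)$ (see \cite{Stanley95a}), and grouping the terms according to $S\cap\{e_1,e_2,e_3\}$, one uses that $\lambda(S)$ is unchanged as soon as $S$ contains two or more of $e_1,e_2,e_3$ (any two of them already merge $x,y,z$ into one component); a short computation then gives
\[
X_{H+\{e_1,e_2\}}+X_{H+\{e_3\}}=X_{H+\{e_1,e_3\}}+X_{H+\{e_2\}}=X_{H+\{e_2,e_3\}}+X_{H+\{e_1\}}.
\]
Since an edge of an incomparability graph is an incomparable pair, deleting an edge means adjoining an order relation, so the identity trades the comparability pattern on a chosen triple for two alternatives.

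For the inductive step I would fix a $(2+2)$ in $P$, say $a<b$ and $c<d$ with all four cross-pairs incomparable, pick a triple among $a,b,c,d$ with exactly two incomparable pairs (for instance $\{a,b,c\}$: $ac$ and $bc$ incomparable, $ab$ comparable), and apply the modular relation. The graphs on the right then have to be realised as incomparability graphs of $(3+1)$-free posets --- not by deleting edges from $\mathrm{inc}(P)$, which need not yield an incomparability graph at all, but by adjoining to $P$ a suitably chosen order relation together with its transitive closure. Here the hypothesis is used twice: $(3+1)$-freeness of $P$ pins down how the elements outside $\{a,b,c,d\}$ can relate to them --- an element lying above the bottom of one chain and incomparable to the entire other chain would already be a forbidden $(3+1)$ --- which is what both keeps the modified posets $(3+1)$-free and makes their incomparability graphs line up into one instance of the modular relation; and the same constraints force the remaining terms of the relation to recombine, so that the signed four-term identity collapses into a bona fide convex combination $X_{\mathrm{inc}(P)}=\sum_i t_i X_{\mathrm{inc}(P_i)}$.

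To close the induction I would produce a potential on $(3+1)$-free posets that strictly drops under these moves --- the number of incomparable pairs, possibly refined lexicographically by the number of $(2+2)$-subposets --- and then compose the convex combinations down to the level of unit interval orders. I expect the crux to be precisely the joint demand in the inductive step: selecting, by a finite case analysis over the comparability types of the elements outside the fixed $(2+2)$, which order relation to adjoin so that \emph{simultaneously} the posets produced stay $(3+1)$-free, their incomparability graphs assemble into a single modular relation, and the resulting coefficients are non-negative. This is exactly where $(3+1)$-freeness, as opposed to mere $(2+2)$-freeness, is essential; organising the accompanying bookkeeping --- conveniently via a Hopf-algebra structure on $(3+1)$-free posets --- is where the real work lies.
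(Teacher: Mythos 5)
First, a remark on context: the paper does not prove this statement --- it is Guay-Paquet's theorem, imported from \cite{Guay-Paquet13} as one of the reductions that allows the authors to work only with unit interval orders --- so there is no in-paper proof to compare against, and your proposal has to be judged against the content of Guay-Paquet's actual argument. Your modular relation is correct: expanding $X_G=\sum_{S\subseteq E(G)}(-1)^{|S|}p_{\lambda(S)}$ and using that $\lambda(S_0\cup e_i\cup e_j)$ is independent of the pair $\{i,j\}$ does yield $X_{H+\{e_1,e_2\}}+X_{H+\{e_3\}}=X_{H+\{e_1,e_3\}}+X_{H+\{e_2\}}$, and a relation of exactly this kind is indeed the engine of Guay-Paquet's proof. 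The overall strategy --- eliminate $(2+2)$-configurations by local moves and terminate at unit interval orders via Scott--Suppes --- is also the right one.

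The gap is that everything after the relation, which is the entire mathematical content of the theorem, is asserted rather than proved, and you say so yourself (``I expect the crux to be\dots'', ``where the real work lies''). Concretely: (i) applied to your triple $\{a,b,c\}$ the identity reads $X_{\mathrm{inc}(P)}=X_{H+\{e_1,e_3\}}+X_{H+\{e_2\}}-X_{H+\{e_3\}}$, which has a negative coefficient; you claim the $(3+1)$-freeness constraints ``force the remaining terms to recombine'' into a convex combination, but you give no mechanism for this cancellation, and it is precisely here that Guay-Paquet needs a carefully chosen local move adapted to the structure theory of $(3+1)$-free posets, together with a separate argument resolving the signed relations into non-negative combinations. (ii) The four graphs in the relation are single- and double-edge modifications of $\mathrm{inc}(P)$, whereas your proposed realization --- adjoin an order relation and take the transitive closure --- generally changes many incomparabilities at once; so the graphs you can realize as incomparability graphs of $(3+1)$-free posets need not be the graphs that actually appear in the identity, and this mismatch is not addressed. (iii) The termination argument fails as stated: $H+\{e_1,e_3\}$ has exactly as many edges as $\mathrm{inc}(P)=H+\{e_1,e_2\}$, so the number of incomparable pairs does not strictly drop, and the ``possibly refined lexicographically'' fallback is neither specified nor verified. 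In short, you have correctly reverse-engineered the opening lemma of \cite{Guay-Paquet13} and the shape of the induction, but the proposal defers the case analysis, the convexity argument, and the termination argument that together constitute the proof.
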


	For a long time the strongest general result in this direction was that of 
	Gasharov
	\cite{Gasharov94}.
	
	\begin{definition} \label{sfunc} For a partition $\lambda = (\lambda_1\geq
		\lambda_2\geq...\geq\lambda_k)$, define the Schur
		functions \em $s_{\lambda}=\mathrm{det}(e_{\lambda_i^*+j-i})_{i,j}$,
		\normalfont where $\lambda^*$ is the conjugate partition to
		$\lambda$. The functions $\{ s_{\lambda}\}$ form a basis of
		$\Lambda$.
	\end{definition}

	\begin{definition} \label{spos}
		A symmetric polynomial $X$ is \em $s$-positive \normalfont if it has 
		non-negative coefficients in the basis of Schur functions.
	\end{definition}
	
	Obviously, a product of $e$-positive functions is $e$-positive. This also 
	holds for $s$-positive functions. Thus, the equality $e_n=s_{1^n}$ implies 
	that $e$-positive functions are $s$-positive, and thus $s$-positivity is 
	weaker than $e$-positivity.
	
	\begin{theorem}[Gasharov] \label{sposthm} 
		If $P$ is a $(3+1)$-free poset, then $X_{\textnormal{inc}(P)}$ is 
		$s$-positive.
	\end{theorem}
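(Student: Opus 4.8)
The plan is to establish the stronger, manifestly positive identity
\[
  X_{\mathrm{inc}(P)}\;=\;\sum_{\lambda}\#\{\,P\text{-tableaux of shape }\lambda\,\}\,s_\lambda,
\]
where a \emph{$P$-tableau of shape $\lambda$} is a bijective filling $T$ of the Young diagram of $\lambda$ by the elements of $P$ such that each row is a chain of $P$ increasing from left to right and no column ever has $T(i,j)>_P T(i+1,j)$. Writing $b_\lambda=\langle X_{\mathrm{inc}(P)},s_\lambda\rangle$ for the Schur coefficients (the $s_\lambda$ being orthonormal for the Hall inner product $\langle\cdot,\cdot\rangle$ on $\Lambda$), the first step is to pair $X_{\mathrm{inc}(P)}$ against the homogeneous basis. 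Since $\{h_\mu\}$ and $\{m_\mu\}$ are dual bases, and since by Definition~\ref{chromfunction} the coefficient of $m_\mu$ in $X_G$ counts the proper colorings using colour $j$ exactly $\mu_j$ times for each $j$, we get that $\langle X_{\mathrm{inc}(P)},h_\mu\rangle$ equals the number $N_\mu$ of ordered tuples $(C_1,\dots,C_{\ell(\mu)})$ of pairwise disjoint chains of $P$ with $|C_i|=\mu_i$ covering $V(P)$ --- here one uses that an independent set of $\mathrm{inc}(P)$ is precisely a chain of $P$. Set $n=|V(P)|$; extend the definition of $N_\mu$ by $N_\mu=0$ if some part of $\mu$ is negative, a zero part contributing an empty block.

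Next I would invoke the Jacobi--Trudi identity $s_\lambda=\det\!\big(h_{\lambda_i-i+j}\big)_{1\le i,j\le n}$ (pad $\lambda$ with zeros to length $n$; $h_0=1$, $h_m=0$ for $m<0$), and pair the expanded determinant termwise with $X_{\mathrm{inc}(P)}$ to get
\[
  b_\lambda\;=\;\sum_{\sigma\in S_n}\operatorname{sgn}(\sigma)\,N_{\big(\lambda_i-i+\sigma(i)\big)_{i=1}^{n}}\,.
\]
So $b_\lambda$ is a signed count of pairs $(\sigma,\mathcal C)$ with $\mathcal C=(C_1,\dots,C_n)$ a tuple of disjoint chains covering $P$ and $|C_i|=\lambda_i-i+\sigma(i)$; picture $C_i$ as a row of cells filled increasingly along $P$ and placed in row $i$ with a horizontal offset encoded by $\sigma$, normalised so that the configurations with $\sigma=\mathrm{id}$ occupy exactly the diagram of $\lambda$. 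The core of the proof is then a sign-reversing involution on the set of all such pairs whose fixed points are exactly the $P$-tableaux of shape $\lambda$ (which occur with sign $+1$); this yields the displayed identity and hence $s$-positivity.

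The involution is an \emph{uncrossing} move, in the spirit of the Lindstr\"om--Gessel--Viennot proof of Jacobi--Trudi: in a configuration that is not a $P$-tableau, a fixed scanning rule selects two consecutive rows and a position exhibiting either a misplaced offset or a column violation $a>_P b$ ($a$ directly above $b$); one exchanges the portions of the two chains beyond that position and simultaneously transposes the two relevant values of $\sigma$, which flips the sign, and choosing the \emph{first} such witness makes the move an involution. The one genuinely delicate step --- and the only place the hypothesis on $P$ is used --- is verifying that after exchanging the two tails one again obtains chains of $P$. This reduces to an \emph{exchange lemma} for chains in a $(3+1)$-free poset: given chains $C,D$ and a relation $c>_P d$ with $c\in C$, $d\in D$, the elements lying past the crossing point can be redistributed between the two new rows while keeping both totally ordered. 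This property genuinely fails for general posets --- a $3+1$ pattern is exactly the obstruction --- so establishing it for $(3+1)$-free $P$ is the heart of the matter, the rest being bookkeeping. (As a check, for $P$ an antichain the only chains are singletons, so $b_\lambda=0$ unless $\lambda=1^n$, with $n!$ fillings of a single column, consistent with $X_{K_n}=n!\,e_n=n!\,s_{1^n}$.)
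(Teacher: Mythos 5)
This theorem is not proved in the paper at all: it is Gasharov's result, quoted from \cite{Gasharov94}, and the only thing the paper says about its proof is the one-line description that Gasharov constructed $P$-tableaux and matched them bijectively with the Schur coefficients. Your outline reproduces exactly that strategy, and the reduction steps you carry out are correct: $\langle X_{\mathrm{inc}(P)},h_\mu\rangle$ does count ordered covers of $P$ by disjoint chains of sizes $\mu_i$ (independent sets of $\mathrm{inc}(P)$ being chains of $P$), Jacobi--Trudi then gives $b_\lambda=\sum_\sigma \operatorname{sgn}(\sigma)\,N_{(\lambda_i-i+\sigma(i))}$, and the intended conclusion $b_\lambda=\#\{P\text{-tableaux of shape }\lambda\}$ is indeed Gasharov's theorem in its sharp form.

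However, as a proof the proposal has a genuine gap precisely where all the content lies: the sign-reversing involution is described only as ``a fixed scanning rule selects a witness and one exchanges the two tails,'' and the exchange lemma is stated but not proved. This is not bookkeeping. Two things must actually be verified. First, that the two new rows obtained after the swap are again chains of $P$: the naive tail-swap at a column violation $a>_P b$ fails in general, and one must locate the correct cut points and prove a comparability statement of the form ``if $a>_P b$ and $b<_P b'$ then every later element of the $a$-row is comparable to the relevant elements of the $b$-row,'' which is exactly where a $(3{+}1)$ configuration is the obstruction; this requires an explicit case analysis, not just the observation that $(3{+}1)$-freeness is ``the right hypothesis.'' Second, that the map is an involution: after the swap, the \emph{first} witness selected by the scanning rule must be the same pair of rows and the same position, which forces a careful choice of the scanning order and of the cut point and is the other delicate half of Gasharov's argument. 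Until the exchange lemma is proved and the involution is pinned down to the point where these two properties can be checked, the argument establishes only the (correct) signed formula for $b_\lambda$, not its positivity.
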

	
	Gasharov proved $s$-positivity by  constructing so-called $P$-tableau and 
	finding a one-to-one correspondence between these tableau and 
	$s$-coefficients~\cite{Gasharov94}. 

	The strongest known result on the $e$-coefficients was obtained by Stanley 
	in~\cite{Stanley95a}. He showed that sums of $e$-coefficients over the 
	partitions of fixed length are non-negative:

	\begin{theorem}[Stanley]
		For a finite graph $G$ and $j\in\mathbb{N}$, suppose 
		$$X_G=\sum\limits_{\lambda}c_{\lambda}e_\lambda,$$
		and let $\text{sink}(G,j)$ be the number of acyclic orientation of $G$ 
		with $j$ sinks. Then
		$$\text{sink}(G,j)=\sum\limits_{l(\lambda)=j}c_{\lambda}.$$
	\end{theorem}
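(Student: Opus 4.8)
The plan is to reduce the statement to an identity between two polynomials in one variable. Let $\phi\colon\Lambda\to\Q[t]$ be the ring homomorphism with $\phi(e_n)=t$ for all $n\ge 1$ (well defined since $\Lambda=\Q[e_1,e_2,\dots]$). Since $e_\lambda=\prod_i e_{\lambda_i}$ we get $\phi(e_\lambda)=t^{l(\lambda)}$, hence $\phi(X_G)=\sum_j\bigl(\sum_{l(\lambda)=j}c_\lambda\bigr)t^j$; so it is enough to prove $\phi(X_G)=\sum_j\operatorname{sink}(G,j)\,t^j$.

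To evaluate $\phi(X_G)$ I would use the coproduct structure of the chromatic symmetric function. Splitting a proper coloring according to which vertices are colored from $\{x_i\}$ and which from $\{y_i\}$ gives the elementary identity $X_G(x_1,x_2,\dots,y_1,y_2,\dots)=\sum_{V(G)=V_1\sqcup V_2}X_{G[V_1]}(x)\,X_{G[V_2]}(y)$. A short computation from $\sum_n e_ny^n=\exp\bigl(\sum_n(-1)^{n-1}p_ny^n/n\bigr)$ gives $\phi(p_n)=(t-1)^n+(-1)^{n-1}$, i.e.\ $\phi(p_n)=\alpha(p_n)+\beta(p_n)$ with $\alpha(p_n)=(t-1)^n$ and $\beta(p_n)=(-1)^{n-1}$; since $\Delta p_n=p_n\otimes1+1\otimes p_n$, this means $\phi=\alpha\star\beta$ as algebra homomorphisms, where $\star$ is convolution in the Hopf algebra $\Lambda$, $\alpha$ is evaluation at the single variable $t-1$, and $\beta$ is the character with $\beta(e_n)=1$ for all $n$. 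Combining, $\phi(X_G)=\sum_{V(G)=V_1\sqcup V_2}\alpha(X_{G[V_1]})\,\beta(X_{G[V_2]})$. Here $\alpha(X_H)=(t-1)^{|V(H)|}$ if $H$ is edgeless and $0$ otherwise, while $\beta(X_H)=(-1)^{|V(H)|}\chi_H(-1)=a(H)$, the number of acyclic orientations of $H$ — the last equality being Stanley's acyclic-orientation reciprocity (with $\chi_H(t)=X_H(1^t)$). This yields
\[\phi(X_G)=\sum_{\substack{S\subseteq V(G)\\ S\text{ independent}}}(t-1)^{|S|}\,a(G\setminus S).\]

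Finally I would identify the right-hand side with $\sum_j\operatorname{sink}(G,j)t^j$ by inclusion–exclusion on sinks. The sink set $Z(o)$ of an acyclic orientation $o$ of $G$ is independent, and $t^{|Z(o)|}=\sum_{S\subseteq Z(o)}(t-1)^{|S|}$; summing over all acyclic orientations and interchanging sums, $\sum_o t^{|Z(o)|}=\sum_{S\text{ independent}}(t-1)^{|S|}\,\#\{o:S\subseteq Z(o)\}$. An acyclic orientation of $G$ with all of $S$ among its sinks is exactly an acyclic orientation of $G\setminus S$ with the edges meeting $S$ oriented inward (acyclicity being unaffected, since sinks have no outgoing edges), so $\#\{o:S\subseteq Z(o)\}=a(G\setminus S)$. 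Hence $\sum_j\operatorname{sink}(G,j)t^j=\sum_o t^{|Z(o)|}=\sum_{S\text{ ind.}}(t-1)^{|S|}a(G\setminus S)=\phi(X_G)$, and comparing coefficients of $t^j$ finishes the proof.

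The main obstacle is the middle step: finding the right algebraic packaging — the factorization $\phi=\alpha\star\beta$, equivalently the plethystic evaluation of $X_G$ at the virtual alphabet $\{t-1\}\ominus\{-1\}$ — and, within it, the reciprocity $a(H)=(-1)^{|V(H)|}\chi_H(-1)$, which is the only genuinely nontrivial external input. One can avoid Hopf-algebraic language and expand $\phi(X_G)$ directly as a signed sum over edge subsets, evaluating it by sign-reversing involutions; this works but turns the one-line ``coproduct plus reciprocity'' argument into a longer, more delicate computation.
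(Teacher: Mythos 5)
The paper does not actually prove this statement: it is quoted as background and attributed to Stanley~\cite{Stanley95a}, so there is no in-paper argument to compare yours against. Your proof is correct and complete. The reduction to showing $\phi(X_G)=\sum_j\operatorname{sink}(G,j)t^j$ for the specialization $e_n\mapsto t$ is exactly the right move, the computation $\phi(p_n)=(t-1)^n+(-1)^{n-1}$ checks out (e.g.\ $\phi(p_2)=t^2-2t$), the convolution factorization $\phi=\alpha\star\beta$ is legitimate because the convolution of two algebra maps from a bialgebra into a commutative ring is again an algebra map and the two sides agree on the primitive generators $p_n$, and the final inclusion--exclusion $t^{|Z(o)|}=\sum_{S\subseteq Z(o)}(t-1)^{|S|}$ together with the bijection between acyclic orientations of $G$ having $S$ among their sinks and acyclic orientations of $G\setminus S$ is sound (sink sets are independent, and orienting all edges into $S$ cannot create a cycle). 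In substance this is Stanley's own argument in modern Hopf-algebraic packaging: the identity you derive, $\phi(X_G)=\sum_{S\ \mathrm{independent}}(t-1)^{|S|}a(G\setminus S)$, is the heart of his proof, and the one genuinely external input, the reciprocity $a(H)=(-1)^{|V(H)|}\chi_H(-1)$, is correctly identified and correctly used (you even give its derivation from Whitney's expansion implicitly via $\beta(p_\lambda)=(-1)^{|\lambda|-l(\lambda)}$). The only cosmetic caveat is that the Hopf-algebra layer is not strictly necessary --- one can substitute $e_n\mapsto t$ directly into the $p$-expansion $X_G=\sum_{S\subseteq E}(-1)^{|S|}p_{\lambda(S)}$ and regroup --- but your packaging is clean and buys a shorter write-up.
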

	
	\begin{remark}
		By taking $j=1$, it follows from the theorem that $c_n$ is non-negative.
	\end{remark}

	In \cite{PSz}, we introduced a combinatorial device that we called   {\em correct sequences} ( or  {\em corrects}).
	
	\begin{definition}
		Let U be a UIO.  We will call a sequence $w = [w_1,\dots, w_k]$ of 
		elements of $U$ {\em
			correct} if
		\begin{itemize}
			\item 
			$w_i\not\succ w_{i+1}$ for $i=1,2,\dots,k-1$ 
			\item and for
			each $j=2,\dots,k$, there exists $i<j$ such that $w_i\not\prec w_j$.
		\end{itemize}
	\end{definition}
	Every sequence of length 1 is correct, and sequence $[w_1,w_2]$ is
	correct precisely when $w_1\sim w_2$. The second condition (supposing that 
	the first one holds) may be reformulated as follows:
	for each $j=1,\dots k$, the subset $\{w_1,\dots,w_j\}\subset U$ is connected
	with respect to the graph structure~${(U,\sim)}$.
	Using this notation, we proved the following.
	\begin{theorem}\label{eposn} 
		Let $X_{\text{inc}(U)}=\sum\limits_{\lambda}c_\lambda e_\lambda$ be a 
		chromatic symmetric function of the $n$-element unit interval order 
		$U$. Then $c_n$ is equal to the number of corrects of length $n$, in 
		which every element of $U$ is used exactly once.
	\end{theorem}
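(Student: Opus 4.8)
The plan is to deduce the statement from Stanley's formula for $\text{sink}(G,j)$ and then to build an explicit bijection. Throughout write $G=\mathrm{inc}(U)$, a graph on $n$ vertices.

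\emph{Step 1: reduction.} The only partition of $n$ of length one is $(n)$, so Stanley's theorem quoted above gives immediately
\[
c_n=\sum_{l(\lambda)=1}c_\lambda=\text{sink}(G,1),
\]
the number of acyclic orientations of $G$ with exactly one sink. If $G$ is disconnected every acyclic orientation has at least one sink per component, and the reformulation of the second correctness axiom (applied with $j=n$) shows there are no corrects of length $n$ using all of $U$; so both sides of the theorem vanish and we may assume $G$ connected. It then suffices to biject acyclic orientations of $G$ with a unique sink against corrects of length $n$ that use every element of $U$ exactly once.

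\emph{Step 2: the correspondence.} To a correct $w=[w_1,\dots,w_n]$ associate the orientation $\theta_w$ obtained by directing each edge $\{w_i,w_j\}$ with $i<j$ from $w_j$ to $w_i$. Every arc decreases the index, so $\theta_w$ is acyclic; and by the connectivity reformulation of the second axiom, for each $k\ge 2$ the vertex $w_k$ has a neighbour among $w_1,\dots,w_{k-1}$, hence an outgoing arc, so $w_1$ is the unique sink (a correct sequence thus begins with the sink of its orientation). Conversely, for a fixed orientation $\theta$ the bijective sequences $w$ with $\theta_w=\theta$ are exactly the linear extensions of the partial order $Q_\theta$ in which $x<y$ iff $\theta$ has a directed path from $y$ to $x$; when $\theta$ has a unique sink $s$, that sink is the minimum of $Q_\theta$, and the same outgoing-arc argument, now applied to an arbitrary linear extension, shows that every such sequence automatically satisfies the second axiom. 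Hence, for $\theta$ with a unique sink, a linear extension $w$ of $Q_\theta$ is correct precisely when it obeys the first axiom $w_i\not\succ w_{i+1}$, i.e. $w_{i+1}>w_i-1$ in a realization $U\subset\R$.

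\emph{Step 3: the key lemma, and the main obstacle.} Everything reduces to: every acyclic orientation $\theta$ of $G$ with a unique sink has exactly one linear extension $w$ of $Q_\theta$ satisfying $w_{i+1}>w_i-1$ for all $i$. I would establish this with the greedy rule ``append the $\le$-smallest vertex that is currently minimal in $Q_\theta$ among the unused ones'', proving (a) the rule never stalls, i.e. the smallest available vertex always exceeds the previously appended vertex minus $1$, which gives existence, and (b) every valid linear extension must make exactly these choices, which gives uniqueness. For (b) one argues that deferring a small available vertex $b$ in favour of a larger one forces the sequence to drift rightward, so that when $b$ is finally inserted its predecessor lies at distance at least $1$ above it, violating the first axiom; here one uses that $b$, being $Q_\theta$-minimal, has all its $Q_\theta$-predecessors — in particular some $\sim$-neighbour of $b$ — already placed. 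For (a) the configuration to be ruled out is a small available vertex trapped below a cluster of already-placed vertices while the unique-sink hypothesis forbids it from having been placed earlier; this is exactly the kind of pattern excluded by $U$ being $(2+2)$- and $(3+1)$-free (Scott--Suppes, Theorem~\ref{S_S}). Step 1 is a citation and Step 2 is a pair of short connectivity arguments, so the technical weight is entirely in Step 3, and within it in the existence half (a) — the one place where the interval-order hypothesis, rather than mere acyclicity of the orientation, must be used.
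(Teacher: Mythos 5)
The paper does not actually prove Theorem~\ref{eposn}: it is imported from \cite{PSz}, where the argument runs through Stanley's $G$-homomorphism (via $c_n=[v_1\cdots v_n]m_{(n)}^G=[v_1\cdots v_n]p_n^G$ and the expansion of $p_n$ in the $e$-basis), so your route through $\mathrm{sink}(G,1)$ and acyclic orientations is genuinely different. Your Steps 1 and 2 are correct: $c_n=\mathrm{sink}(G,1)$, the map $w\mapsto\theta_w$ sends corrects to unique-sink acyclic orientations, the fibre over such a $\theta$ consists of the linear extensions of $Q_\theta$ satisfying only the first axiom (the second is automatic because every non-sink vertex has an out-arc to an earlier vertex), and the whole theorem reduces to your key lemma. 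That lemma is true, and the greedy strategy can be made to work.

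The gap is that Step 3, which you correctly identify as carrying all the technical weight, is a plan rather than a proof, and the mechanisms you sketch for (a) and (b) are not the ones that actually close the argument. For uniqueness (b), ``the sequence drifts rightward'' is false as stated: after choosing a larger vertex $y$ the first axiom only forbids dropping by $1$ or more per step, so the sequence can descend; and the observation that $b$ has a placed $\sim$-neighbour is not what is needed. The operative fact is that any two simultaneously available (i.e.\ $Q_\theta$-minimal among unused) vertices are non-adjacent in $\mathrm{inc}(U)$, hence \emph{comparable in $U$}, hence at distance $\ge 1$; since the deferred vertex $x$ stays available until placed, every intermediate $w_i$ satisfies $|w_i-x|\ge 1$, and induction with $w_{i+1}>w_i-1$ pins them all in $[x+1,\infty)$, so the predecessor of $x$ violates the first axiom. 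For existence (a), no appeal to $(2+2)$- or $(3+1)$-freeness beyond the trivial dichotomy is needed: if the smallest available vertex $x$ was already available when $p$ was chosen, greedy gives $p\le x$, hence $p\not\succ x$; otherwise $p$ is the last element of the down-set of $x$ to be placed, and any directed path $x\to z_1\to\cdots\to p$ of length $\ge 2$ would force $z_1$ (which lies below $x$ but above $p$ in $Q_\theta$) to be placed both before and after $p$ --- so $x\to p$ is an arc, i.e.\ $x\sim p$. Until (a) and (b) are written out with these (or equivalent) arguments, the proposal does not establish the theorem.
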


In particular, we showed that the coefficient $ c_n $ of the chromatic polynomial 
is non-negative for UIOs. This reproduced results by Stanley~\cite{Stanley95b} 
and Chow~\cite{Chow95}, who showed the positivity of 
	$c_n$ for $(3+1)$-free posets using combinatorial techniques, and linked 
	$e$-coefficients with the acyclic orientations of the incomparability 
	graphs. Using corrects allowed us to go further, and show the following
	(see~\cite{Paunov16b}):	
	\begin{theorem} \label{eposn21} 
		Let $X_{\text{inc}(P)}=\sum\limits_{\lambda}c_\lambda e_\lambda$ be a 
		chromatic symmetric function of the $(3+1)$-free poset~$P$, and 
		$k\in\mathbb{N}$. Then $c_{n-k,1^k}$, $c_{n-2,2}$, $c_{n-3,2,1}$ and 
		$c_{2^k,1^{n-2k}}$ are non-negative integers.
	\end{theorem}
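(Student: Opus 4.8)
The plan is to reduce to unit interval orders and then read each of the four families of coefficients off a correct-sequence expansion of the chromatic symmetric function. By Guay-Paquet's Theorem~\ref{G_P}, $X_{\mathrm{inc}(P)}$ is a convex combination of the functions $X_{\mathrm{inc}(P')}$ with $P'$ both $(3+1)$- and $(2+2)$-free, i.e.\ a UIO by Theorem~\ref{S_S}; since $X\mapsto c_\lambda$ is linear and a convex combination of non-negative reals is non-negative, it suffices to treat $P=U$ a UIO. Integrality of $c_\lambda$ is automatic, as the $e_\lambda$ form an integral basis. So from now on $U$ is an $n$-element UIO and Theorem~\ref{eposn} is available.

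Next I would set up a signed formula for a general $c_\lambda(\mathrm{inc}(U))$. A proper coloring of $\mathrm{inc}(U)$ is exactly a partition of $U$ into chains; grouping the monomials $x^c$ by this partition writes $X_{\mathrm{inc}(U)}$ as a sum of (augmented) monomial symmetric functions, and re-expanding each factor $e_{\lambda_i}$ in the monomial basis and inverting yields, for every partition $\lambda$, an expression
$$c_\lambda(\mathrm{inc}(U)) \;=\; \sum (\pm 1),$$
the sum running over ordered set partitions $U=B_1\sqcup\dots\sqcup B_{\ell(\lambda)}$ with $|B_i|=\lambda_i$, each $B_i$ carrying a linear order making it a correct sequence, and the sign coming from an inclusion--exclusion that corrects for the fact that $X_{\mathrm{inc}(U)}$ does not factor the way $e_\lambda=\prod_i e_{\lambda_i}$ does. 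For $\lambda=(n)$ there is one block and no sign: this is precisely Theorem~\ref{eposn}.

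The heart of the matter is then to collapse the signed sum for the four shapes in the statement, and the point is that $(n-k,1^k)$, $(n-2,2)$, $(n-3,2,1)$ and $(2^k,1^{n-2k})$ all have at most one part of size $\ge 3$. For $\lambda=(2^k,1^{n-2k})$ every block has size $\le 2$, a size-$2$ block being just an edge of the graph $(U,\sim)$, so the configurations become partial matchings of $(U,\sim)$ decorated with a correct structure on the uncovered elements; here I would produce a sign-reversing involution toggling a single $\sim$-edge that cancels every negative contribution, checking that such a toggle preserves correctness of the residual sequence (this is where the real-line structure of $U$ enters). For $\lambda=(n-k,1^k)$ the $k$ singleton blocks carry no sign, and I would argue by induction on $k$ that, after cancellation, $c_{n-k,1^k}$ counts correct sequences of length $n-k$ together with a compatible ordering of the remaining $k$ elements, reducing to Theorem~\ref{eposn} when $k=0$. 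For $\lambda=(n-2,2)$ and $\lambda=(n-3,2,1)$ there is a single large block together with one size-$2$ block and at most one singleton, and the correct structure on the large block is rigid enough that the sign-$(-1)$ terms match injectively with a subfamily of the sign-$(+1)$ terms, leaving a bona fide count. In each case the surviving expression enumerates a finite set, so the coefficient is a non-negative integer.

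I expect this last step --- the cancellations --- to be the only real obstacle: one must build the sign-reversing involutions (or telescoping identities) and verify that they preserve correctness of the leftover sequences. The hypothesis ``at most one part of size $\ge 3$'' is exactly what makes this go through: with two parts of size $\ge 3$, a sub-chain can legitimately migrate between two large blocks in a way that such an involution cannot track, and removing this obstruction for all two-part partitions is precisely the role of the Escher construction developed in the present paper.
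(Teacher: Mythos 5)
First, a point of reference: the paper does not actually prove Theorem~\ref{eposn21} --- it is quoted from \cite{Paunov16b} --- so there is no internal proof to compare yours against. Judged on its own terms, your proposal has a correct and standard opening: the reduction via Guay-Paquet (Theorem~\ref{G_P}) and Scott--Suppes (Theorem~\ref{S_S}) to the case of a UIO, and the remark that integrality is free because the $e_\lambda$ form an integral basis. Everything after that, however, is a plan rather than a proof. The signed expansion of $c_\lambda$ over ordered set partitions carrying correct-sequence structures is asserted, not derived: the passage from chain partitions (augmented monomials) to a signed correct-sequence count is precisely where the $G$-homomorphism machinery of \S\ref{sec:Stan} and the transition matrices between the bases must be worked out, and the signs and multiplicities are not pinned down. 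More importantly, for each of the four shapes the entire content of the theorem is the sign-reversing involution or injection, which you do not construct; you say so yourself (``I expect this last step --- the cancellations --- to be the only real obstacle''). That obstacle \emph{is} the proof. In \cite{Paunov16b} each case requires a separate, delicate correspondence built on the structure of correct sequences in a UIO --- the analogue of what the present paper does for length-2 partitions, where Propositions~\ref{mainprop} and~\ref{excepprop} carry all the weight --- and nothing in your outline guarantees that the proposed edge-toggling involution for $(2^k,1^{n-2k})$, or the induction for $(n-k,1^k)$, actually closes.

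Concretely, two things are missing. (i) The precise signed formula for $c_\lambda$ for each shape: for instance, for $\lambda=(n-2,2)$ one needs the analogue of the identity $m_{nk}=p_n p_k-p_{n+k}$ used in \S\ref{sec:structural}, pushed through Stanley's homomorphism and re-expressed via correct sequences; for $(n-3,2,1)$ and $(2^k,1^{n-2k})$ the corresponding identities have more terms and more signs. (ii) For each shape, an explicit injection from the negatively-signed configurations into the positively-signed ones, together with a verification that the image is again a configuration of the required type; this is exactly where the unit-interval structure (Lemma~\ref{funlemma}-style impossibility arguments) must enter, and it is the step most likely to fail if done naively. Your heuristic that ``at most one part of size $\ge 3$'' is what makes the cancellation tractable is a reasonable guess as to why these four families are accessible, but as written it is an expectation, not an argument.
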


In this article, we use an alternative combinatorial presentation of the coefficient $ c_n $ 
introduced by Siegl in \cite{Siegl}, where they also showed $ e $-positivity for all partitions
with equal parts. We took up this new tool, and managed to give a very sort, elegant proof of Stanley's conjecture for all partitions of length 2: 
$ \lambda=n\ge k $. This approach can be extended to the case of arbitrary partitions, 
but this work is  somewhat more involved. Very recently, another proof was announced for the length-2 case by Abreu and Nigro \cite{Abreu} 
using substantially more complicated tools which links this combinatorial problem to the topology of certain algebraic varieties. 
We hope that the combination of these approaches will lead to further insights into the meaning of e-positivity.

This short article is structured as follows: we begin with recalling Stanley's homomorphism, 
which we used in our previous work as well \S\ref{sec:Stan}. We set up our notations in \S\ref{sec:prepar}, 
and give some initial structural statements in the \S\ref{sec:structural}. The main construction and the proof is presented then in \S\ref{sec:proof}.

	\textbf {Acknowledgments.} We are grateful to Alexander Paunov for his help, ideas, and his generous support.  
	
	\section{Stanley's $G$-homomorphism}\label{sec:Stan}
Let $G$ be a finite graph 
	with vertex set $V(G)=\{v_1,...,v_n\}$ and edge set $E(G)$. 
We will consider the vertices as formal commuting variables and introduce the polynomial ring 
$\Lambda_G=\mathbb{R}[v_1,...,v_n]$. Stanley \cite[p.~6]{Stanley95b}  defined a ring homomorphism 
$ \rho_G:\Lambda\to\Lambda_G$ by setting the images of the free generators of $ \Lambda $, the elementary symmetric polynomials:
$$e_i^G =\sum\limits_{\substack{S\subset V\\
				 S\mathrm{-indep},\, \#S=i}}\prod\limits_{v\in S}v,$$ where the sum is
		taken over all $i$-element independent subsets $S$ of $V$, in which no two
		vertices form an edge, i.e. independent subsets. We set $e_0^G=1$, and 
		$e_i^G=0$ for $i<0$.

  The correspondence $e_i\mapsto e_i^G$ then extends to the
	ring homomorphism $ \rho_G:\Lambda\to\Lambda_G$, called Stanley's 	{\em $G$-homomorphism}. 
For $f\in \Lambda$, we will use the notation $f^G$ 
	for $\rho_G(f)$.
\begin{remark}
	Note that the polynomials $f^G$ are not necessarily symmetric.
	\end{remark}

	\begin{example}
		Given a partition $\lambda = \lambda_1\geq 
		\lambda_2\geq...\geq\lambda_k,\ k\in \mathbb{N},$ we have 
		$$e_{\lambda}^G = \prod\limits_{i=1}^k e_i^G,$$
		$$s_{\lambda}^G=\mathrm{det}(e_{\lambda_i^*+j-i}^G).$$
	\end{example}

	For an integer function $\alpha: V\rightarrow \mathbb{N}$ define the monomial
	$$v^\alpha = \prod\limits_{v\in V}v^{\alpha(v)},$$ 
	and introduce the notation $[v^\alpha]f^G$  for the coefficient of $v^\alpha$ in the 
	polynomial $f^G\in\Lambda_G$.

	Let $G^\alpha$ denote the graph, obtained by replacing every vertex $v$ of 
	$G$ by the complete subgraph of size $\alpha(v)$: $K_{\alpha(v)}^v$. Given 
	vertices $u$ and $v$ of $G$, a vertex of $K_{\alpha(v)}^v$ is connected to 
	a vertex of $K_{\alpha(u)}^u$ if and only if $u$ and $v$ form an edge in 
	$G$.

Stanley 
	\cite[p.~6]{Stanley95b} found the following connection between the $G$-analogues of 
	symmetric functions and the chromatic polynomial $X_G$. Following Stanley~\cite{Stanley95b}. 

Let $ x $ and $ y $ stand for two infinite set of variables. Using the Cauchy identity, one can prove the well-known identity
	$$\sum\limits_\lambda s_\lambda(x)s_{\lambda^*}(y)=\sum\limits_\lambda 
	m_\lambda(x)e_\lambda(y) = \sum\limits_\lambda e_\lambda(x)m_\lambda(y)$$

Applying the $ G $-homomorphism $ \rho_G $, we obtain the function
	\begin{equation}\label{GCauchy}
		T(x,v) = \sum\limits_\lambda m_\lambda(x)e^G_\lambda(v) = 
		\sum\limits_\lambda s_\lambda(x)s^G_{\lambda^*}(v)= 
		\sum\limits_\lambda e_\lambda(x)m^G_\lambda(v).
	\end{equation}
	where the sums are taken over all partitions. Then 
	
	\begin{equation}\label{gnechrom}
		[v^\alpha]T(x,v)\prod\limits_{v\in V}\alpha(v)! =X_{G^\alpha}.
	\end{equation}

	An immediate consequence of the formulas \eqref{GCauchy} and
	\eqref{gnechrom} is the following result of Stanley:
	\begin{theorem}[Stanley]\label{poscrit}
		For every finite graph G
		\begin{enumerate}
			\item $X_{G^\alpha}$ is s-positive for every
			$\alpha:V(G)\rightarrow\mathbb{N}$ if and only if $s_\lambda^G\in
			\mathbb{N}[V(G)]$ for every partition $\lambda$.
			\item $X_{G^\alpha}$ is e-positive for every 
			$\alpha:V(G)\rightarrow\mathbb{N}$ if and only if $m_\lambda^G\in 
			\mathbb{N}[V(G)]$ for every partition $\lambda$. 
		\end{enumerate}
	\end{theorem}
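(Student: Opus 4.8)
The plan is to read off the $s$- and $e$-expansions of $X_{G^\alpha}$ directly from the single identity \eqref{gnechrom}, feeding into it the three-fold expression \eqref{GCauchy} for $T(x,v)$. Concretely, I would fix a graph $G$ and a function $\alpha\colon V(G)\to\mathbb{N}$, substitute into \eqref{gnechrom} the middle expression $T(x,v)=\sum_\lambda s_\lambda(x)\,s^G_{\lambda^*}(v)$, and extract the coefficient of $v^\alpha$ term by term. This is legitimate because $s^G_{\lambda^*}(v)$ is a homogeneous polynomial in the $v$'s of degree $|\lambda|$, so only the finitely many $\lambda$ with $|\lambda|=|\alpha|$ survive, and one obtains
\begin{equation*}
X_{G^\alpha}=\Bigl(\prod_{v\in V}\alpha(v)!\Bigr)\sum_\lambda\bigl([v^\alpha]s^G_{\lambda^*}(v)\bigr)\,s_\lambda(x),
\end{equation*}
together with the analogous formula coming from $T(x,v)=\sum_\lambda e_\lambda(x)\,m^G_\lambda(v)$, in which $s_\lambda(x)$ is replaced by $e_\lambda(x)$ and $s^G_{\lambda^*}$ by $m^G_\lambda$. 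Since $\{s_\lambda\}$ and $\{e_\lambda\}$ are bases of $\Lambda$, these are precisely the expansions of $X_{G^\alpha}$ in the Schur and the elementary bases, and since the prefactor $\prod_{v\in V}\alpha(v)!$ is a positive integer, the coefficients have the same signs as the integers $[v^\alpha]s^G_{\lambda^*}(v)$, respectively $[v^\alpha]m^G_\lambda(v)$.

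For part (1) I would then argue as follows. The displayed formula shows that $X_{G^\alpha}$ is $s$-positive exactly when $[v^\alpha]s^G_{\lambda^*}(v)\ge 0$ for every partition $\lambda$. Letting $\alpha$ range over all of $\mathbb{N}^{V(G)}$, and using that $\lambda\mapsto\lambda^*$ is a bijection of the set of partitions, this condition holds for every $\alpha$ if and only if every monomial coefficient of every polynomial $s^G_\mu(v)$ is non-negative. Since $s_\mu=\det(e_{\mu^*_i+j-i})_{i,j}$ forces $s^G_\mu=\det(e^G_{\mu^*_i+j-i})_{i,j}\in\mathbb{Z}[V(G)]$ — each $e^G_k$ being a polynomial with $\{0,1\}$-coefficients — non-negativity of those coefficients is literally the assertion $s^G_\mu\in\mathbb{N}[V(G)]$. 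Part (2) is verbatim the same argument with $e_\lambda(x)$ and $m^G_\lambda$ in place of $s_\lambda(x)$ and $s^G_{\lambda^*}$; here one uses that $m_\lambda$ lies in the $\mathbb{Z}$-span of the $e_\mu$ (both $\{m_\mu\}$ and $\{e_\mu\}$ being $\mathbb{Z}$-bases of the ring of symmetric functions over $\mathbb{Z}$), so that $m^G_\lambda\in\mathbb{Z}[V(G)]$ as well, making ``non-negative coefficients'' and ``coefficients in $\mathbb{N}$'' coincide.

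I do not expect any substantial obstacle: the statement really is a formal consequence of \eqref{GCauchy} and \eqref{gnechrom}. The only two points that each deserve a line of care are the termwise extraction of $[v^\alpha]$ from the a priori infinite sum over $\lambda$, which the degree count above settles, and the two small integrality observations, which are what upgrade the easy implications into genuine ``if and only if'' statements rather than one-directional ones.
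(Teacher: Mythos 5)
Your proposal is correct and follows exactly the route the paper intends: the paper presents Theorem~\ref{poscrit} as ``an immediate consequence'' of \eqref{GCauchy} and \eqref{gnechrom} without writing out the details, and your argument is precisely the spelled-out version of that deduction, extracting $[v^\alpha]$ from the Schur and elementary expansions of $T(x,v)$ and reading off the coefficients. The two points you flag for care (the degree count justifying termwise extraction, and the integrality of $s^G_\mu$ and $m^G_\lambda$) are handled correctly.
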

	
	\begin{remark}\label{c_m}
		If $X_{G^\alpha}=\sum\limits_{\lambda}c^\alpha_{\lambda}e_\lambda,$ 
		then $c_\lambda^\alpha=[v^\alpha]m^G_\lambda.$ Hence, monomial 
		positivity of $m^G_\lambda$ is equivalent to the positivity of 
		$c_\lambda^\alpha$ for every $\alpha$.
	\end{remark}

From now on, we will concentrate on the incomparability graphs of UIOs, and we will use the simplified notation $ m_\lambda^U $ for the relevant coefficient.
Then Theorem \ref{poscrit} combined with the reductions cited in \S\ref{sec:intro} leads to the following simple  reformulation of Stanley's chromatic conjecture in the language of $ G $-polynomials.
\begin{proposition}
Suppose that every UIO  $ U = \{v_1,\dots,v_n\} $, the coefficient of the 
monomial $ \prod_{j=1}^n v_j $ in the polynomial $ m^{\mathrm{inc}(U)}_\lambda $ is nonnegative. Then Conjecture 
\ref{eposconj} is true.
\end{proposition}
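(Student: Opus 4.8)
The plan is to deduce the proposition purely formally from three facts already recorded above: the Guay--Paquet reduction (Theorem~\ref{G_P}), the Scott--Suppes characterisation of unit interval orders (Theorem~\ref{S_S}), and the elementary identity of Remark~\ref{c_m} specialised to the constant weight $\alpha\equiv 1$. No new combinatorics is needed at this stage; the substance of the paper lies in later verifying the hypothesis for length-$2$ partitions.

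First I would reduce Conjecture~\ref{eposconj} to unit interval orders. Let $P$ be a $(3+1)$-free poset. By Theorem~\ref{G_P} we may write $X_{\mathrm{inc}(P)}=\sum_i t_i\,X_{\mathrm{inc}(P_i)}$ as a convex combination in which every $P_i$ is both $(3+1)$- and $(2+2)$-free; by Theorem~\ref{S_S} each $P_i$ is a UIO. A nonnegative linear combination of $e$-positive symmetric functions is again $e$-positive, since in the $e$-basis the coefficients add with nonnegative weights. Hence it suffices to prove that $X_{\mathrm{inc}(U)}$ is $e$-positive for every UIO $U$.

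Next I would identify the relevant $e$-coefficient with the quantity appearing in the hypothesis. Fix a UIO $U=\{v_1,\dots,v_n\}$, put $G=\mathrm{inc}(U)$, and write $X_G=\sum_\lambda c_\lambda e_\lambda$. Take $\alpha$ to be the constant function $1$ on $V(G)$. Then $G^\alpha=G$, because each vertex $v$ is replaced by the one-vertex clique $K_1^v$ and the edges are unchanged; consequently $X_{G^\alpha}=X_G$ and $c_\lambda^\alpha=c_\lambda$. Applying Remark~\ref{c_m} with this $\alpha$ gives $c_\lambda=c_\lambda^\alpha=[v^\alpha]\,m_\lambda^{G}$, and since $v^\alpha=\prod_{j=1}^n v_j$, the coefficient $c_\lambda$ is exactly the coefficient of $\prod_{j=1}^n v_j$ in $m_\lambda^{\mathrm{inc}(U)}$. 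By the hypothesis this is nonnegative for every partition $\lambda$, so $X_{\mathrm{inc}(U)}$ is $e$-positive; combined with the previous paragraph, Conjecture~\ref{eposconj} follows.

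There is no genuine obstacle in this argument: it is an assembly of cited reductions, and the only point requiring a word of care is the identity $\mathrm{inc}(U)^{\alpha}=\mathrm{inc}(U)$ for $\alpha\equiv 1$, which is what allows Remark~\ref{c_m} to be invoked. I would also remark (though it is not needed for the proposition) that blowing up a vertex of a UIO into a clique again produces a UIO --- replace $v$ by $\alpha(v)$ copies placed in a cluster of diameter smaller than every other relevant gap --- so the hypothesis, quantified over \emph{all} UIOs, is in fact equivalent to full monomial-positivity of every $m_\lambda^{\mathrm{inc}(U)}$, and via Theorem~\ref{poscrit}(2) it even yields $e$-positivity of every blow-up $\mathrm{inc}(U)^\alpha$; the weak form used above already suffices.
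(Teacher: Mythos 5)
Your proof is correct and follows exactly the route the paper intends: it assembles Theorem~\ref{G_P} with Theorem~\ref{S_S} to reduce to UIOs, and then uses Remark~\ref{c_m} with $\alpha\equiv 1$ (so that $G^\alpha=G$ and $c_\lambda=[\prod_j v_j]\,m_\lambda^{\mathrm{inc}(U)}$) to conclude $e$-positivity. The paper leaves these details implicit, so your write-up is simply a fleshed-out version of the same argument; the closing observation that blow-ups of UIOs are again UIOs is a correct bonus but not needed.
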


We have thus defined for each UIO $ U $ of $ n $ unit intervals on the real line and 
each partition $ \lambda  $ of $ n $ an integer $[\prod_{j=1}^n v_j] m^{\mathrm{inc}(U)}_\lambda $, which, from now on, we will simply denote by  $ m_\lambda^U $:
\[  m_\lambda^U  \overset{\mathrm{def}}{=}[\prod_{j=1}^n v_j] m^{\mathrm{inc}(U)}_\lambda \] 
Our task is to show that this number is nonnegative for all partitions. We will treat the case of length-2 partitions in this short paper.
	
\section{Preparations}\label{sec:prepar}

A key idea appeared in \cite{}, where an alternative definition of correct 
sequences was introduced. It will be convenient to set the following 
notation for sequences in unit interval orders. 

Given two unit intervals $ w_1,w_2 $ on the real line there is a trichotomy of 
possible relative position between them:
\begin{enumerate}
	\item $ w_1 $ intersects $ w_2 $,
	\item $ w_1\cap w_2=\emptyset $ and $ w_1  $ is left of $ w_2 $,
	\item $ w_1\cap w_2=\emptyset $ and $ w_1  $ is right of $ w_2 $.
\end{enumerate}
We will use the notation $ w_1\prec w_2 $ for case (2), $ w_1\succ w_2 $ for 
case 
(3) and $ w_1\ra w_2 $ if \textit{either case (1) or case (2) holds}. 

Let $ U $
 be a UIO, i.e  a set of unit intervals on the real line.
We will call a sequence of  elements of $ U $
\[ w_1\lhd w_2\lhd\dots \lhd w_{l-1}\lhd w_m,\]
where,  in each place, $ \lhd$ is one of 
the symbols $\prec $  or $ 
\ra $, a \textit{logical sequence}, which is, in fact, a statement (which is either true or not).

We will use the structure of UIOs in the following form then.
\begin{lemma}\label{funlemma}
	Let 
	\[ w_1\lhd w_2\lhd\dots \lhd w_{m-1}\lhd w_m\]
	be a logical sequence, which contains $ r $ $ \prec $-symbols, and $ s $ $ \ra 
	$-symbols with $ r+s=m-1 $ and $ r\ge s $. Then this statement is false.  
\end{lemma}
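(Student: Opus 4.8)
The plan is to pass from the combinatorial assertion to a statement about the positions of the intervals on the line, where the relations $\prec$ and $\ra$ turn into strict inequalities that telescope. Realize $U$ as a set of unit intervals and write $a_i$ for the left endpoint of $w_i$, so that $w_i=[a_i,a_i+1]$. The relational dictionary is immediate: the assertion $w_i\prec w_{i+1}$ (that $w_i$ is disjoint from and to the left of $w_{i+1}$) is equivalent to $a_{i+1}-a_i>1$, while the assertion $w_i\ra w_{i+1}$ (that $w_i$ is \emph{not} disjoint‑and‑to‑the‑right of $w_{i+1}$) is equivalent to $a_i-a_{i+1}\le 1$, i.e. $a_{i+1}-a_i\ge -1$. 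So if the logical sequence held we would obtain reals $a_1,\dots,a_m$ in which each of the $r$ steps labelled $\prec$ raises the running value by more than $1$, and each of the $s$ steps labelled $\ra$ lowers it by at most $1$.

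Now telescope. Summing the successive differences,
\[
a_m-a_1=\sum_{i=1}^{m-1}\bigl(a_{i+1}-a_i\bigr)>r\cdot 1+s\cdot(-1)=r-s\ge 0 .
\]
Equivalently, and perhaps more transparently: since $r\ge s$ we may match every $\ra$-step to a distinct $\prec$-step; each matched pair raises the running value by more than $1+(-1)=0$, and each of the $r-s$ leftover $\prec$-steps raises it by more than $1$. As $r+s=m-1\ge 1$ together with $r\ge s$ forces $r\ge 1$, at least one strict increase is present, so the total displacement is strictly positive: $a_m>a_1$. But a logical sequence of the kind under consideration is closed, $w_m=w_1$, hence $a_m=a_1$ — a contradiction. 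Therefore no such true logical sequence exists, which is exactly the content of the lemma.

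The combinatorial heart is just this charge‑counting telescoping: weight $+1$ to each $\prec$, at least $-1$ to each $\ra$, and add. The only points that need care are the bookkeeping of strict versus non‑strict inequalities in the dictionary — one must make sure that across all $m-1$ steps the accumulated inequality $a_m-a_1>r-s$ stays strict, using that $r\ge 1$ — and spelling out which feature of the logical sequences in play supplies the closing identification $w_m=w_1$ on which the final contradiction rests. I do not anticipate any genuine obstacle beyond these two bookkeeping matters.
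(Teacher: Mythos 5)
The paper states Lemma~\ref{funlemma} without proof, so there is no argument of the authors' to measure yours against; the telescoping (charge-counting) argument you give is surely the intended one, and it is correct as far as it goes. The one substantive issue is the closing identification $w_m=w_1$, which you invoke (``a logical sequence of the kind under consideration is closed'') but which is not among the stated hypotheses --- and without it the lemma is simply false: $w_1\prec w_2$ is a logical sequence with $r=1\ge 0=s$ and is satisfied by any comparable pair of intervals. This is a defect of the paper's statement rather than of your argument: every application in the paper (the impossible sequence in the proof of Lemma~\ref{kEschers}, and those in Propositions~\ref{vsubEschers}, \ref{mainprop} and \ref{excepprop}) is to a sequence whose first and last entries are the same element of $U$, so the statement you actually prove --- a \emph{closed} logical sequence with $r\ge s$ is false --- is exactly the one the paper needs; but you should state $w_m=w_1$ as a hypothesis rather than appeal to it as an unexplained ``feature of the sequences in play.'' With that hypothesis added the proof is complete. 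The strict-versus-nonstrict bookkeeping you worry about is harmless: the paper's own convention ($u\succ w$ iff $u\ge w+1$) makes $\prec$ contribute $\ge 1$ and $\ra$ contribute $>-1$, the reverse of your closed-interval dictionary, but under either convention the telescoped displacement $w_m-w_1$ comes out strictly positive (using $r\ge 1$, which follows from $r\ge s$ and $m\ge 2$), contradicting $w_m=w_1$.
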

Informally, we will call such sequences \textit{impossible}.

We call a sequence $ [w_1,w_2,\dots,w_k] $ of different elements of $ U $ a \textit{$ k 
$-Escher} if
\[ w_1\ra w_2\ra\dots\ra w_k\ra w_1 \]
holds.
We will denote the operation of cyclic permutation on 
sequences by $ \zeta $:
\[    \zeta: [w_1,w_2,\dots,w_{k}]\mapsto[w_k,w_1,w_2\dots,w_{k-1}].
\]
Then acting by powers of $ \zeta$ on an $ k $-Escher produces $ k-1 $ new  $ k 
$-Eschers. As all the $k$ statements we obtain this way are equivalent,  
sometimes, we will use the term \textit{cyclic Escher} for the isomorphism 
class of sequences related by $ \zeta $. An Escher then is a cyclic  
Escher with a chosen initial point. Also, given a $ k $-Escher, we will think 
of the index set as integers reduced to modulo $ k $; thus $ w_0=w_k $, for 
example.

A key result of \cite{} is that the number $ m_n^U $
of a UIO is the same as the number of Eschers of the same length. Combined with our earlier results, this leads to the following statement.
\begin{theorem}\label{correctEscher}
	Let $ U = \{v_1,\dots,v_n\}$ be a UIO. Then 
	\[  m_n^U =\#\{\text{correct sequences of length }n\text{ in } U\}
	=\#\{\text{$ n $-Eschers in } U\}.
	\]
\end{theorem}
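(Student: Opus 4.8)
The plan is to establish the two equalities in Theorem~\ref{correctEscher} separately, treating them as a bijective/enumerative chain. The first equality, $m_n^U = \#\{\text{correct sequences of length }n\}$, is already available: it is precisely Theorem~\ref{eposn}, combined with the dictionary of \S\ref{sec:Stan} identifying $c_n$ with $m_n^U = [\prod_j v_j]\,m_n^{\mathrm{inc}(U)}$ (Remark~\ref{c_m}). So the real content is the second equality, $\#\{\text{correct sequences of length }n\} = \#\{n\text{-Eschers}\}$, which I would prove by exhibiting an explicit bijection between the set of correct sequences using every element of $U$ exactly once and the set of $n$-Eschers in $U$.

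First I would unwind the definitions in the unit-interval picture. A correct sequence $[w_1,\dots,w_n]$ satisfies $w_i \not\succ w_{i+1}$, i.e.\ $w_i \ra w_{i+1}$ in the notation of \S\ref{sec:prepar}, for all $i<n$; so a correct sequence is exactly a \emph{logical path} $w_1 \ra w_2 \ra \dots \ra w_n$ (all $\ra$-symbols) satisfying the additional connectivity condition that every prefix $\{w_1,\dots,w_j\}$ is connected in $(U,\sim)$. An $n$-Escher is a logical \emph{cycle} $w_1 \ra w_2 \ra \dots \ra w_n \ra w_1$ with a chosen basepoint. The natural map from Eschers to sequences is simply to forget the closing arrow $w_n \ra w_1$, producing a sequence satisfying the first condition of correctness; conversely, given a correct sequence one wants to close it up, i.e.\ show $w_n \ra w_1$ holds, and one wants the resulting object with basepoint to be a genuine $n$-Escher (all elements distinct, which is automatic). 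The two things to verify are therefore: (a) given an $n$-Escher, the sequence obtained by cutting it at the basepoint is correct, i.e.\ satisfies the prefix-connectivity condition; and (b) given a correct sequence, the arrow $w_n \ra w_1$ holds, so it closes into an Escher, and moreover every $n$-Escher arises exactly once this way.

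For direction (a), I expect the key device to be Lemma~\ref{funlemma}: if some prefix $\{w_1,\dots,w_j\}$ were disconnected in $(U,\sim)$, one can split it into two blocks, one entirely $\prec$-below the other (this uses that $\sim$ is the "overlap" relation and that non-overlapping intervals are linearly separated), and then tracing the Escher cycle through the complement produces a logical sequence from the "high" block back to the "low" block with too many $\prec$-symbols relative to $\ra$-symbols — contradicting the "impossible sequence" lemma. For direction (b), closing up a correct sequence: connectivity of the full set $\{w_1,\dots,w_n\} = U$ together with $w_i \ra w_{i+1}$ for all $i$ should force $w_n \ra w_1$, again via Lemma~\ref{funlemma} applied to the path $w_1 \ra \dots \ra w_n$ augmented with the assumption (for contradiction) that $w_n \succ w_1$. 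The count-exactly-once part is then bookkeeping: cutting an $n$-Escher at its (unique, chosen) basepoint is injective, and the closing-up map is its inverse on the nose.

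The main obstacle I anticipate is direction (a) — proving that cutting a cyclic Escher at any point yields a sequence all of whose prefixes are connected. The statement "every prefix is connected" is global and somewhat delicate; the non-obvious step is to correctly set up the disconnection into a $\prec$-separated pair of blocks and to account precisely for the arrows entering and leaving each block so that the arithmetic $r \geq s$ needed to invoke Lemma~\ref{funlemma} actually comes out. One has to be careful that the arrows within the Escher that leave the prefix and the arrows that return are counted on the right side of the inequality, and that no $\sim$-edge crosses the would-be partition. Once the right combinatorial configuration is identified, the contradiction with impossibility should be immediate, but getting that configuration right — in particular handling which block contains $w_1$ and orienting the separation consistently — is where the care is needed.
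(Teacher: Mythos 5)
Your reduction of the first equality to Theorem~\ref{eposn} is fine, but your plan for the second equality rests on a false premise: the set of correct sequences of length $n$ and the set of $n$-Eschers are in general \emph{different} subsets of the set of all orderings of $U$, so the ``cut at the basepoint / close up'' correspondence you propose (which is the identity map on the underlying word) is not a bijection --- in fact both of your verification steps (a) and (b) fail. Concretely, take $U=\{u_1,u_2,u_3\}$ with left endpoints $u_1=0$, $u_2=0.6$, $u_3=1.5$, so that $u_1\sim u_2$, $u_2\sim u_3$ and $u_1\prec u_3$. The sequence $[u_1,u_2,u_3]$ is correct (each step is a $\sim$, and every prefix is connected), yet $u_3\succ u_1$, so it does not close up into an Escher; this refutes your step (b), where you claim that connectivity of the full set forces $w_n\to w_1$. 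Conversely, $[u_1,u_3,u_2]$ is a $3$-Escher (since $u_1\prec u_3$, $u_3\sim u_2$, $u_2\sim u_1$), but its prefix $\{u_1,u_3\}$ is disconnected in $(U,\sim)$, so cutting a cyclic Escher at an arbitrary basepoint does not yield a correct sequence; this refutes your step (a). Both counts equal $3$ in this example (one cyclic Escher with three basepoints, versus the three correct sequences $[u_1,u_2,u_3]$, $[u_2,u_1,u_3]$, $[u_3,u_2,u_1]$, and indeed $m_3^U=3$), so the theorem holds here, but only as an equality of cardinalities, not via the identity on words. Any genuine bijection would have to re-order the elements of the sequence.

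You should also be aware that the paper does not prove this theorem by a bijection between the two combinatorial sets at all: the first equality is quoted from the authors' earlier work (Theorem~\ref{eposn}), and the second is obtained by citing Siegl's independent result that $m_n^U$ equals the number of $n$-Eschers. The equality of the two counts is therefore a corollary of two separate enumerations of the same coefficient $m_n^U$, and no direct combinatorial correspondence between correct sequences and Eschers is offered. So while the idea of proving the second equality bijectively is a legitimately different (and potentially more illuminating) route, the specific map you propose cannot work, and constructing a correct one is a genuinely nontrivial open task rather than the bookkeeping you describe.
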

This is thus the first instance of the conjecture: it shows that statement for 
all "partitions" of length 1.
\begin{corollary}
	Since $ \#\{\text{$ n $-Eschers in } U\}=N\cdot\#\{\text{cyclic $ n
		$-Eschers in } U\} $, we can conclude that $ m_n^U $ is divisible by 
		$ n	$.
\end{corollary}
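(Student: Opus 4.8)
The plan is to prove the corollary by exhibiting a \emph{free} action of the cyclic group $\Z/n\Z$ on the set of $n$-Eschers in $U$ whose orbits are exactly the cyclic $n$-Eschers; the identity $\#\{n\text{-Eschers}\}=n\cdot\#\{\text{cyclic }n\text{-Eschers}\}$ is then the orbit-counting formula, and divisibility of $m_n^U$ by $n$ follows at once by substituting $m_n^U=\#\{n\text{-Eschers in }U\}$ from Theorem~\ref{correctEscher}.

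First I would record that the cyclic shift $\zeta$ maps $n$-Eschers to $n$-Eschers: the defining condition $w_1\ra w_2\ra\dots\ra w_n\ra w_1$ is a cyclic chain of $\ra$-relations, unchanged by relabelling the index set modulo $n$, and $\zeta(w)$ has the same (distinct) entries as $w$. Hence $\zeta$ generates an action of $\Z/n\Z$ on the finite set of $n$-Eschers, and by the definition of cyclic Escher two $n$-Eschers lie in the same orbit precisely when they represent the same cyclic Escher. The key point is that this action is free. Suppose $\zeta^k$ fixes an $n$-Escher $w=[w_1,\dots,w_n]$ for some $0<k<n$; then $w_i=w_{i+k}$ for all $i$ modulo $n$, hence $w_i=w_{i+mk}$ for every integer $m$, so $w$ is constant on each coset of the subgroup $\langle k\rangle\subseteq\Z/n\Z$. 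Writing $d=\gcd(k,n)$, this subgroup has order $n/d\ge 2$ (since $0<k<n$ forces $d<n$), so some coset contains two distinct indices $i\ne j$ with $w_i=w_j$, contradicting the requirement that the entries of an Escher be pairwise distinct. Therefore every stabilizer is trivial, every orbit has exactly $n$ elements, and the set of $n$-Eschers splits into $\#\{\text{cyclic }n\text{-Eschers}\}$ orbits of size $n$.

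There is essentially no obstacle here: the only point requiring care is the freeness argument, which rests entirely on the distinctness of the entries of an Escher — this is exactly why ``powers of $\zeta$ produce $n-1$ new $n$-Eschers'' as stated above. Assembling the pieces, $m_n^U=\#\{n\text{-Eschers in }U\}=n\cdot\#\{\text{cyclic }n\text{-Eschers in }U\}$, and in particular $n\mid m_n^U$.
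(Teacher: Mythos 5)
Your proof is correct and follows essentially the same route as the paper: the paper simply asserts that the powers of $\zeta$ acting on an $n$-Escher produce $n-1$ new $n$-Eschers (so that Eschers come in orbits of size $n$) and then invokes Theorem~\ref{correctEscher} to identify $m_n^U$ with the Escher count. Your write-up just makes explicit the freeness of the $\Z/n\Z$-action, which rests on the required distinctness of the entries of an Escher, exactly as the paper implicitly assumes.
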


\section{The conjecture in the length-2 case} \label{sec:structural}

\subsection{A reformulation using Eschers} Fix to positive integers $ n\ge k $, and set $ N=n+k $. In this paper, we will show that 
$ m_{nk}^U\ge0 $ for any UIO $ U $ with $ N $ elements.

For a positive integer $ m $, introduce the notation 
\[    P^U_{m} = \{\text{length-}m\text{ Eschers in } U\}  \]
Let us suppose, for simplicity, that $ n>k $.
To study $ m_{nk}^U $, we will use the formula
\[   m_{nk} = p_n\cdot p_k - p_{n+k}\text{ in }\Lambda.\]
Applying Stanley's homomorphism, this implies that 
\[  m_{nk}^U  = \# P_{n}^U\cdot \# P_{k}^U-\#P^U_{n+k}. \]

Then the chromatic conjecture for length-2 partitions will follow from the 
following statement.
\begin{theorem} \label{thmmain}
	Let $ n>k $, and let $ U $ be a UIO of $ n+k  $ intervals. Then 
$$\# P_{n}^U\cdot \# P_{k}^U\ge\#P^U_{n+k}. $$
\end{theorem}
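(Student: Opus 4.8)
The plan is to prove the inequality $\#P_n^U\cdot\#P_k^U\ge\#P^U_{n+k}$ by constructing an injection
\[
\Psi:\ P^U_{n+k}\ \hookrightarrow\ P_n^U\times P_k^U,
\]
that is, a way of cutting every $(n+k)$-Escher into an ordered pair consisting of an $n$-Escher and a $k$-Escher, in such a way that the original long Escher can be recovered from the pair. Given an $(n+k)$-Escher $w=[w_1,\dots,w_{n+k}]$ (thought of cyclically, indices mod $n+k$), the natural move is to look for an index $i$ such that the two arcs $[w_{i+1},\dots,w_{i+k}]$ and $[w_{i+k+1},\dots,w_{i+n+k}]$ are \emph{each} closed up into Eschers, i.e. such that $w_i\ra w_{i+1}$, $w_{i+k}\ra w_{i+k+1}$ already hold (these we have for free as consecutive steps of $w$) and in addition the two \emph{closing} relations $w_{i+k}\ra w_{i+1}$ and $w_{i+n+k}=w_i\ra w_{i+k+1}$ hold. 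So the first step is to show that for every $(n+k)$-Escher at least one such splitting index $i$ exists; call these the \textit{good cut points}.

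The existence of a good cut point is where Lemma~\ref{funlemma} (the ``impossible sequence'' lemma) should do the work, and this is the step I expect to be the main obstacle. The idea: suppose no cut point is good. For a fixed $i$, failure of goodness means at least one of $w_{i+k}\ra w_{i+1}$ or $w_i\ra w_{i+k+1}$ fails; since $\ra$ is the negation of $\succ$, failure means $w_{i+k}\succ w_{i+1}$ or $w_i\succ w_{i+k+1}$. Chaining such forced $\succ$-relations around the cycle together with the $\ra$-relations supplied by the Escher $w$ itself, one assembles a long logical sequence in which the number of $\prec$-symbols (read the wrong-way $\succ$'s backwards as $\prec$'s after reorienting) would dominate the number of $\ra$-symbols, contradicting Lemma~\ref{funlemma}. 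Making this counting argument precise — choosing the right reorientation of the cyclic word and bookkeeping the symbol counts so that the hypothesis $r\ge s$ of the lemma is met — is the crux; the asymmetry $n>k$ is presumably what makes the count come out in our favour, and the boundary case $n=k$ (excluded in Theorem~\ref{thmmain}, handled separately in \S\ref{sec:proof}) is exactly where it degenerates.

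Once a good cut point exists, the second step is to make the choice \emph{canonical} so that $\Psi$ is well-defined: e.g. take the good cut point $i$ that is smallest in the lexicographic order of the resulting pair of Eschers, or smallest index in some fixed labelling of $U=\{v_1,\dots,v_N\}$. The third and final step is injectivity: from the image pair $\big([w_{i+1},\dots,w_{i+k}],[w_{i+k+1},\dots,w_{i+n+k}]\big)\in P_k^U\times P_n^U$ one recovers the multiset of vertices of $w$ as the disjoint union (the parts have sizes $k$ and $n$, all elements distinct since $w$ is an Escher on $N=n+k$ distinct intervals), and then one recovers the cyclic word $w$ by concatenation; the chosen initial point of $w$ is then pinned down by recording, as part of $\Psi$ or as a consequence of the canonical choice, which of the $N$ rotations of the concatenation was the original — here one uses that $\Psi$ lands in \emph{ordered} pairs and that the cut was canonical, so two distinct Eschers cannot produce the same pair. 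I would also double-check the degenerate situations: $k=1$ (where $P_1^U=U$ and an Escher of length $1$ is just a vertex, so the statement reads $\#P_n^U\cdot N\ge \#P^U_{n+1}$, consistent with the Corollary after Theorem~\ref{correctEscher}), and small $n+k$, to be sure the injection is not vacuously mis-stated.
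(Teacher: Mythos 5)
Your overall strategy coincides with the paper's: construct an explicit injection $P^U_{n+k}\hookrightarrow P^U_n\times P^U_k$ by cutting each $(n+k)$-Escher at a canonically chosen index where both arcs close up into Eschers (the paper's ``valid $k$-subEscher''), with existence of such an index proved by chaining forced $\succ$-relations around the cycle into an impossible sequence via Lemma~\ref{funlemma}. That existence step is the paper's Proposition~\ref{vsubEschers}, and your sketch of it is essentially right (purity forces all windows into the same failure mode, and iterating $w_0\prec w_{k-1}\prec w_{2(k-1)}\prec\cdots$ around the cycle is impossible). However, you have misidentified the crux. The genuine gap is in your third step, injectivity, which you dismiss with ``the cut was canonical, so two distinct Eschers cannot produce the same pair.'' This does not follow. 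First, if the output is simply the two raw arcs, then all $n+k$ rotations of a cyclic $(n+k)$-Escher having a unique cut position are sent to the same pair, so the map is not injective; the paper must renormalize the starting points of the two output Eschers (to the unique indices $\equiv 0\bmod k$ and $\equiv 0\bmod n$ in the respective arcs) precisely to retain the location of the original starting point of $w$. Second, and more seriously, to invert the map one must locate the cut position from the pair $(u,v)$ alone, and the natural candidate (the first position admitting a ``valid insertion'', i.e.\ $u_i\to v_{i+1}$ and $v_i\to u_{i+1}$) could in principle occur \emph{earlier} than the position coming from the cut of $w$, in which case the reconstruction returns a different Escher. Ruling this out is the substantive content of the paper's Propositions~\ref{mainprop} and~\ref{excepprop}: between any two consecutive valid insertions of $(u,v)$ the corresponding stretch of $w$ cannot be pure, hence contains a valid $k$-subEscher, hence the first valid insertion of $\phi(w)$ is not earlier than the first valid sub-Escher of $w$. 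This requires its own impossible-sequence counting argument (separately in the pure$^-$ and pure$^+$ cases, and separately for the ordinary case $L<n$ and the exceptional case $L\ge n$), none of which appears in your proposal.

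Your fallback of ``recording, as part of $\Psi$, which of the $N$ rotations was the original'' does not repair this: it changes the codomain to $P^U_n\times P^U_k\times\{0,\dots,N-1\}$ and therefore proves only $N\cdot\#P^U_n\cdot\#P^U_k\ge\#P^U_{n+k}$, not the claimed inequality. To complete the argument along your lines you must either prove the analogue of Proposition~\ref{mainprop} or find another mechanism by which the pair $(u,v)$ alone determines the cut.
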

We will give a more precise statement below. For now, we begin with some 
preparatory work. 

\subsection{Sub-Eschers}  We  introduce  a key 
notion in this paragraph. 
\begin{lemma} \label{kEschers}
	Let $ U $ be a UIO of $ N=n+k $ elements, and let $ 
w=[w_1,\dots,w_{N}]\in P_N^U  $ be an 
	Escher in $ U $. 
 Then for any value of the index $ m $ (all indices are 
understood modulo $ N $), we have 3 mutually exclusive possibilities:
	\begin{enumerate}
		\item $ w_{m+k}\to w_{m+1} $ and $ w_m\to w_{m+k+1} $
		\item $ w_{m+k}\succ w_{m+1} $ and $ w_m\to w_{m+k+1} $
		\item $ w_{m+k}\to w_{m+1} $ and $ w_m\succ w_{m+k+1} $
	\end{enumerate}
\end{lemma}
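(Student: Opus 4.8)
The plan is to reduce the statement to a single impossibility and then invoke Lemma~\ref{funlemma}. First I would note that, by the trichotomy of relative positions of two intervals, for any $u,w\in U$ exactly one of $u\to w$ and $u\succ w$ holds; that is, $\to$ is precisely the negation of $\succ$. Applying this to the two pairs $(w_{m+k},w_{m+1})$ and $(w_m,w_{m+k+1})$, there are a priori four combinations of the two relations, of which cases (1), (2), (3) are three; these three are pairwise mutually exclusive because any two of them disagree on one of the two relations. Hence the lemma is equivalent to the assertion that the remaining combination
\[ w_{m+k}\succ w_{m+1}\quad\text{and}\quad w_m\succ w_{m+k+1} \]
cannot occur.

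Next I would derive a contradiction from this combination. Rewriting the two $\succ$-relations as $w_{m+1}\prec w_{m+k}$ and $w_{m+k+1}\prec w_m$, and using that the Escher $w$ supplies the edges $w_{m+k}\to w_{m+k+1}$ and $w_m\to w_{m+1}$ (all indices mod $N$; the second of these is the Escher edge $w_{m+N}\to w_{m+N+1}$), I would assemble the logical sequence
\[ w_{m+1}\prec w_{m+k}\to w_{m+k+1}\prec w_m\to w_{m+1}. \]
This is a logical sequence on five slots containing $r=2$ symbols $\prec$ and $s=2$ symbols $\to$, so $r+s=4$ and $r\ge s$; by Lemma~\ref{funlemma} it is impossible. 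But under the assumed combination every one of its four constituent relations is true, a contradiction. Therefore the fourth combination never occurs, which is exactly the content of the lemma.

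The argument is short, and the only real choice is which four-term cyclic logical sequence to feed into Lemma~\ref{funlemma}: the two Escher edges $w_{m+k}\to w_{m+k+1}$ and $w_m\to w_{m+1}$ are precisely what is needed to ``close up'' the two hypothesised $\prec$-relations into an impossible loop. One should only observe that a logical sequence is permitted to repeat an element — the sequence above begins and ends at $w_{m+1}$ — which is harmless, since Lemma~\ref{funlemma} concerns the conjunction of the listed relations and makes no distinctness assumption. (If $k=1$ the endpoints $w_{m+k}$ and $w_{m+1}$ coincide and the excluded combination is impossible outright, as no interval satisfies $u\succ u$; this causes no complication.)
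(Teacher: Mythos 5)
Your proof is correct and follows essentially the same route as the paper: the only content is ruling out the fourth combination $w_{m+k}\succ w_{m+1}$, $w_m\succ w_{m+k+1}$, and the impossible logical sequence you build, $w_{m+1}\prec w_{m+k}\to w_{m+k+1}\prec w_m\to w_{m+1}$, is just a cyclic rotation of the one the paper feeds into Lemma~\ref{funlemma}. Your extra remarks (explicit mutual exclusivity, repeated endpoints, the degenerate case $k=1$) are harmless elaborations of points the paper leaves implicit.
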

\begin{proof}
	Indeed, the last remaining possibility: $ w_{m+k}\succ w_{m+1}  $ and $ 
	w_m\succ w_{m+k+1} $ would create the impossible sequence (cf. Lemma 
	\ref{funlemma}):
	\[ w_m\ra w_{m+1}\prec w_{m+k}\to w_{m+k+1}\prec w_m. \]
\end{proof}
Note that in cases  (1)  and (3)  the sequence $ [w_{m+1},\dots,w_{m+k}] $ is a $ k 
$-Escher, and in cases (1) and (2), the sequence $ [w_{m+k+1},\dots,w_m] $ 
is an $ n $-Escher. If case (1) holds, then we will call the sequence $ 
[w_{m+1},\dots,w_{m+k}] $ a \textit{valid $ k $-subEscher} of $ w $. 

The statement of this Lemma may be strengthened as follows.

\begin{lemma} \label{strengthened}
	Assume that we are in case (2) of Lemma \ref{kEschers}, and thus the 
	sequence $ [w_{m+1}, 
	w_{m+2},\dots, w_{m+k}] $ is \textbf{not} a $ k $-Escher. Then the three 
	sequences
	\begin{align*}
		&[w_{m+k}, w_{m+k+1},\dots, w_{m+k+n-1}], \\ &[w_{m+k+1}, 
		w_{m+k+2},\dots, w_{m+k+n}],\\ &[w_{m+k+2}, w_{m+k+3},\dots, 
		w_{m+k+n+1}]. 
	\end{align*}
	are all $ n $-Eschers in $ U $.
\end{lemma}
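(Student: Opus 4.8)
The plan is to reduce the statement to checking, for each of the three listed sequences, a single ``wrap‑around'' arrow, and then to dispatch the two non‑trivial ones by exhibiting impossible logical loops in the sense of Lemma~\ref{funlemma}.

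First I would rewrite the three sequences using the congruences modulo $N=n+k$. Since $m+k+n-1\equiv m-1$, $m+k+n\equiv m$ and $m+k+n+1\equiv m+1$, the three sequences are, respectively,
\[
[w_{m+k},w_{m+k+1},\dots,w_{m-1}],\qquad [w_{m+k+1},\dots,w_m],\qquad [w_{m+k+2},\dots,w_{m+1}],
\]
each consisting of $n$ (necessarily distinct, since $w\in P_N^U$) elements of $U$ indexed by $n$ consecutive residues. As $w$ is an $N$-Escher, every ``internal'' arrow $w_j\to w_{j+1}$ between consecutive indices already holds; hence each of the three sequences is an $n$-Escher precisely when its single closing arrow holds, namely $w_{m-1}\to w_{m+k}$, $w_m\to w_{m+k+1}$, and $w_{m+1}\to w_{m+k+2}$ respectively. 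The middle one is exactly the second half of the hypothesis of case~(2) of Lemma~\ref{kEschers}, so that sequence is an $n$-Escher with nothing further to prove (this was already noted right after Lemma~\ref{kEschers}).

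For the first sequence I would argue by contradiction: if $w_{m-1}\to w_{m+k}$ fails, then $w_{m-1}\succ w_{m+k}$, i.e.\ $w_{m+k}\prec w_{m-1}$. Together with the Escher arrows $w_{m-1}\to w_m$ and $w_m\to w_{m+1}$, and with the relation $w_{m+1}\prec w_{m+k}$ (which is the relation $w_{m+k}\succ w_{m+1}$ of case~(2), written the other way round), this produces the closed logical loop
\[
w_{m+k}\prec w_{m-1}\to w_m\to w_{m+1}\prec w_{m+k},
\]
which has two $\prec$-symbols and two $\to$-symbols, hence is impossible by Lemma~\ref{funlemma}. Entirely analogously, for the third sequence, if $w_{m+1}\to w_{m+k+2}$ fails then $w_{m+k+2}\prec w_{m+1}$, and combining this with $w_{m+1}\prec w_{m+k}$ (case~(2) again) and the Escher arrows $w_{m+k}\to w_{m+k+1}$ and $w_{m+k+1}\to w_{m+k+2}$ yields the closed loop
\[
w_{m+k+2}\prec w_{m+1}\prec w_{m+k}\to w_{m+k+1}\to w_{m+k+2},
\]
again balanced between $\prec$ and $\to$, hence impossible. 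This gives all three closing arrows and finishes the proof.

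The index bookkeeping modulo $N$ and the remark that all internal arrows are automatic are routine; the actual content is recognizing that both problematic cases can be closed up into a length‑four loop with an equal number of $\prec$'s and $\to$'s. I expect the point requiring the most care to be keeping the directions of $\prec$, $\succ$ and $\to$ straight when reversing relations, and checking that each loop genuinely returns to a previously used vertex so that Lemma~\ref{funlemma} applies; the structural reason the argument works is that we are in case~(2) rather than case~(1) or~(3), which is what makes the strict relation $w_{m+k}\succ w_{m+1}$ available to us.
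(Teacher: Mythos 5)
Your proof is correct and follows essentially the same route the paper intends: the paper omits the proof, stating it is analogous to that of Lemma~\ref{kEschers}, and your argument is exactly that analogue — reduce each sequence to its single closing arrow and rule out its failure by a closed logical loop with equally many $\prec$'s and $\to$'s, contradicting Lemma~\ref{funlemma}. The index reductions modulo $N$ and the sign of each relation are all handled correctly.
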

The proof is analogous to the one given above and will be omitted.

Now assume
\[ w_0\to w_1\to\dots\to w_{L-1}\to w_L. \]
We will call such a sequence \textit{pure} if for all $ 0<m\le L-k $, case (2) 
or (3) of Lemma \ref{kEschers} hold (i.e. there are no valid 
$ k $-subEschers).

Then Lemma \ref{strengthened} has the following important
\begin{corollary} \label{purecor}
	If $ w_0\to w_1\to\dots\to w_{L-1}\to w_L$ is pure than either case (2) 
	holds for all $ 0\le m< L-k $ or case (3) holds for all $ 0\le m<L-k $.
\end{corollary}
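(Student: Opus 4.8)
The plan is to leverage Lemma~\ref{strengthened}: in the pure situation, whenever case (2) holds at some index $m$ (so $[w_{m+1},\dots,w_{m+k}]$ is not a $k$-Escher), it forces three consecutive $n$-Eschers starting at $w_{m+k}$, $w_{m+k+1}$, $w_{m+k+2}$; and symmetrically, whenever case (3) holds at $m$, it should force three consecutive $n$-Eschers ending at (roughly) $w_{m-1}$, $w_m$, $w_{m+1}$ — or, by the same argument run from the other end, three consecutive $k$-Eschers. The key observation is that cases (2) and (3) are distinguished by \emph{which} of the two "boundary" sequences is forced to close up into an Escher: case (2) protects the long ($n$-)side, case (3) protects the short ($k$-)side. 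So the real content is a rigidity statement: you cannot switch from case (2) at index $m$ to case (3) at index $m+1$ (or vice versa) within a pure run, because the overlap conditions imposed by Lemma~\ref{strengthened} at two adjacent indices are incompatible unless both indices are in the same case.

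First I would write out, for a fixed index $m$ with $0\le m<L-k$, exactly what cases (2) and (3) of Lemma~\ref{kEschers} say in terms of the relations $w_{m+k}\succ w_{m+1}$ vs. $w_{m+k}\to w_{m+1}$ and $w_m\succ w_{m+k+1}$ vs. $w_m\to w_{m+k+1}$. Then I would suppose, for contradiction, that there is an index $m$ where case (2) holds and case (3) holds at $m+1$ (the situation where the run "flips"; the other flip direction is symmetric). Case (2) at $m$ gives $w_{m+k}\succ w_{m+1}$ and $w_m\to w_{m+k+1}$. Case (3) at $m+1$ gives $w_{m+1+k}\to w_{m+2}$ and $w_{m+1}\succ w_{m+k+2}$. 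I would then assemble these four relations, together with the ambient chain relations $w_j\to w_{j+1}$ coming from the hypothesis $w_0\to\cdots\to w_L$, into a single logical sequence (a cycle or a segment in the sense of Lemma~\ref{funlemma}) and count the $\prec$-symbols versus the $\ra$-symbols. The goal is to exhibit an impossible sequence — one with $r\ge s$ in the notation of Lemma~\ref{funlemma} — exactly as in the proof of Lemma~\ref{kEschers}, where the contradiction sequence was $w_m\ra w_{m+1}\prec w_{m+k}\to w_{m+k+1}\prec w_m$.

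Concretely, I expect the contradiction cycle to be built from: the segment $w_{m+1}\to w_{m+2}\to\cdots\to w_{m+k}$ (that is $k-1$ many $\ra$-symbols; note $w_{m+1}$ and $w_{m+k}$ are the endpoints of a length-$k$ window), followed by $w_{m+k}\succ w_{m+1}$ read backwards as $w_{m+1}\prec w_{m+k}$ — but since $[w_{m+1},\dots,w_{m+k}]$ is not a $k$-Escher we in fact have $w_{m+k}\succ w_{m+1}$, i.e. $w_{m+1}\prec w_{m+k}$ is false; so the right way is to close the length-$k$ segment $w_{m+1}\to\cdots\to w_{m+k}$ with $w_{m+k}\succ w_{m+1}$ and observe this is exactly the impossible configuration of Lemma~\ref{funlemma} with $r=1$, $s=k-1$ only when $k\le 2$, which is \emph{not} enough. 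So the genuinely load-bearing step is to bring in the case-(3)-at-$(m+1)$ relations $w_{m+1}\succ w_{m+k+2}$ and $w_{m+k+1}\to w_{m+2}$ and to interleave the two short windows $[w_{m+1},\dots,w_{m+k}]$ and $[w_{m+2},\dots,w_{m+k+1}]$ so that the $\prec$-count catches up with the $\ra$-count. The main obstacle, and the step I would spend the most care on, is this bookkeeping: choosing the right cyclic arrangement of the $\approx 2k$ relevant relations so that exactly $r\ge s$, thereby triggering Lemma~\ref{funlemma}. Once a single flip index is ruled out, Corollary~\ref{purecor} follows immediately by induction on $m$ along the pure run, since we have shown the case label is locally constant on $\{0,\dots,L-k-1\}$ and that set is an interval hence connected.
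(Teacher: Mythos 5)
Your overall strategy is correct: rule out a flip of the case label between consecutive indices $m$ and $m+1$, then conclude because the index set $\{0,\dots,L-k-1\}$ is an interval and purity excludes case (1) everywhere. The genuine gap is exactly the step you flag as the ``main obstacle'': you never exhibit the impossible logical sequence, and the constructions you sketch point in the wrong direction. Closing the window $w_{m+1}\to\cdots\to w_{m+k}$ against $w_{m+k}\succ w_{m+1}$ gives $r=1$, $s=k-1$, as you note; and interleaving the two length-$k$ windows into a cycle of roughly $2k$ relations cannot help either, because every extra chain arrow you include increases $s$ by one while the case relations only supply two $\prec$'s in total. The resolution is that no such bookkeeping is needed: only the two $\succ$-relations and two chain arrows enter. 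If case (2) holds at $m$ and case (3) at $m+1$, then $w_{m+k}\succ w_{m+1}$ and $w_{m+1}\succ w_{m+k+2}$, and together with $w_{m+k}\to w_{m+k+1}\to w_{m+k+2}$ these form the closed logical sequence
\[ w_{m+1}\prec w_{m+k}\to w_{m+k+1}\to w_{m+k+2}\prec w_{m+1}, \]
with $r=s=2$, impossible by Lemma \ref{funlemma}. Symmetrically, case (3) at $m$ and case (2) at $m+1$ yield $w_{m+k+1}\prec w_{m}\to w_{m+1}\to w_{m+2}\prec w_{m+k+1}$, again with $r=s=2$. This closes the gap and your induction then goes through. (Note also that the two relations you do not use, $w_m\to w_{m+k+1}$ and $w_{m+k+1}\to w_{m+2}$, are irrelevant here; trying to work them in is what leads to the unmanageable cycle.)

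For comparison, the paper deduces the corollary from Lemma \ref{strengthened}: in case (2) at $m$, the third listed $n$-Escher closes up precisely via the relation $w_{m+1}\to w_{m+k+2}$, which is the negation of the $\succ$-relation defining case (3) at $m+1$; so case (3) is excluded at $m+1$ and purity forces case (2) there. Either route rests on the same four-term impossible cycle displayed above, so once you supply that sequence your argument and the paper's coincide in substance.
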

In the first case, we will call the sequence pure$^- $, while in the second we 
will call it pure$^+ $. These arguments have imply the following key statement.

\begin{proposition} \label{vsubEschers}
Let $ U $ be a UIO, and $ w\in P^U_N$ be an Escher.  Then there exists an index 
$ i  $ such that \\
 $ [w_{i+1}, w_{i+2},\dots, w_{i+k}] $ is a valid $ k 
$-Escher, i.e. that such that
$ [w_{i+k+1}, w_{i+k+2},\dots, w_{i+k+n}] $ is an Escher as well.
\end{proposition}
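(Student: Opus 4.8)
The plan is to show that a \pup Escher (or more precisely, a maximal \pup or \pum segment) cannot "wrap all the way around" the cyclic Escher $w$, so that somewhere the purity must break and a valid $k$-subEscher must appear. Concretely, start from an arbitrary index and walk around $w$ keeping track of which of the three cases of Lemma~\ref{kEschers} holds at each position $m$. If case~(1) ever occurs we are done immediately, since that is by definition a valid $k$-subEscher. So assume for contradiction that case~(1) never occurs; then by Corollary~\ref{purecor} applied to the full cyclic sequence, either case~(2) holds at \emph{every} index $m$, or case~(3) holds at every index $m$. (One must be slightly careful: Corollary~\ref{purecor} is stated for a linear pure segment $w_0\to\cdots\to w_L$, so here I would invoke it on the segment obtained by cutting the cycle at an arbitrary point and observe that the two possible "types" agree across the cut point because the cycle closes up, forcing a single global type.)

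The remaining work is to derive a contradiction from the assumption that case~(2) holds at every index (the case~(3) assumption being symmetric, or reducible to it by reversing orientation). The idea: if case~(2) holds at index $m$, then $w_{m+k}\succ w_{m+1}$, i.e. $w_{m+1}$ is strictly to the left of $w_{m+k}$, while $w_m\to w_{m+k+1}$. I would extract from "case~(2) everywhere" a monotonicity/drift statement — something like: the left endpoints (or the positions) of the intervals $w_{m+1}, w_{m+k+1}, w_{m+2k+1},\dots$ must strictly decrease, or more cleanly, that chaining the relations $w_{j+k}\succ w_{j+1}$ for $j=m, m+k, m+2k,\dots$ together with the $\to$ relations produces, after going once around the $N$-cycle an appropriate number of times, a logical sequence with strictly more $\prec$-symbols than $\to$-symbols, hence an impossible sequence by Lemma~\ref{funlemma}. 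The combinatorial bookkeeping here is exactly the kind of "count the $\prec$ versus $\to$" argument already used in the proof of Lemma~\ref{kEschers}, just iterated around the whole cycle.

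More explicitly, I expect the contradiction to come from the following chain: assuming case~(2) at all indices, I would build the cyclic logical sequence
$$ w_1 \prec w_{k+1}\prec w_{2k+1}\prec\cdots $$
by repeatedly using $w_{jk+1}\prec w_{(j+1)k+1}$ (which is the relation $w_{m+1}\prec w_{m+k}$ rewritten with $m = jk$, valid since that is the content of case~(2)), and note that since $\gcd$-type considerations let the index $jk+1 \bmod N$ cycle through a full orbit and return to $1$, we get a closed logical cycle consisting entirely of $\prec$-symbols — which is patently impossible (it asserts $w_1\prec w_1$ after enough steps, or in Lemma~\ref{funlemma} terms, $r>0=s$). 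If the orbit of $1$ under $+k \bmod N$ is not everything, I would run the same argument on each orbit, or alternatively interleave the $\to$ relations $w_{jk}\to w_{jk+k+1}$ to close up a cycle covering all indices while keeping $r > s$; either way Lemma~\ref{funlemma} applies. Thus case~(2)-everywhere is impossible, symmetrically case~(3)-everywhere is impossible, so case~(1) must occur at some index $i$, giving the valid $k$-subEscher $[w_{i+1},\dots,w_{i+k}]$, and automatically (cf.\ the remark after Lemma~\ref{kEschers}, since case~(1) gives both an $n$-Escher and a $k$-Escher) $[w_{i+k+1},\dots,w_{i+k+n}]$ is an $n$-Escher.

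\textbf{Main obstacle.} The delicate point is the passage from the \emph{linear} statement of Corollary~\ref{purecor} to a \emph{global} dichotomy around the cycle, and then correctly assembling the around-the-cycle logical sequence so that the tally of $\prec$ against $\to$ genuinely exceeds the threshold of Lemma~\ref{funlemma} ($r \ge s$) — one has to make sure the "wrap-around" closes up on a genuine orbit and that no off-by-one in the modular indexing spoils the count. I would handle this by first treating the case where $\gcd(k,N)$-orbits are nice, and isolating the counting lemma ("case~(2) at every index of a closed $N$-cycle forces an impossible sequence") as a clean standalone sublemma before stitching everything together.
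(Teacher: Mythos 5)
Your proposal is correct and follows essentially the same route as the paper: assume no valid $k$-subEscher exists, invoke the pure$^-$/pure$^+$ dichotomy of Corollary~\ref{purecor} around the whole cycle, and derive an impossible closed chain of $\prec$-relations via Lemma~\ref{funlemma}. The only slip is an off-by-one you already flag yourself: case (2) gives $w_{m+1}\prec w_{m+k}$, so the chained indices advance by $k-1$ (as in the paper's $w_0\prec w_{k-1}\prec w_{2(k-1)}\prec\dots$), not by $k$; this does not affect the argument, since any cyclic chain of $\prec$'s is impossible regardless of the step size.
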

\begin{proof}
	Indeed, assuming the contrary, we would obtain an cyclic (infinite) 
	length-$ N $ pure sequence. If this sequence is \pum, then we obtain an 
	infinite logical
	sequence
	\[ w_0\prec w_{k-1}\prec w_{2(k-1)}\prec\dots, \]
	which is clearly impossible. The case of a \pup sequence is similar.
\end{proof}

\subsection{Insertions}
Now we pass to a dual notion. Let $ u\in P^U_n $ and $ v\in P^V_k $. We have a 
similar set of statements in this case.
\begin{lemma} \label{insertionlemma}
	For each index $ i\in\mathbb{N} $,  we have 3 mutually exclusive possibilities:
	\begin{enumerate}
		\item $ u_i\to v_{i+1} $ and $ v_i\to u_{i+1} $,
		\item $ u_i\succ v_{i+1} $ and $ v_i\to u_{i+1} $,
			\item $ u_i\to v_{i+1} $ and $ v_i\succ u_{i+1} $.
	\end{enumerate}
Moreover, between two subsequent indices of type (1), one has always only 
indices of same type, (2) or (3).
\end{lemma}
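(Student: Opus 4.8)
The plan is to mirror the argument for Lemma \ref{kEschers} almost verbatim, replacing the single Escher $w$ by the pair $(u,v)$ and using the fact that $u$ is an $n$-Escher and $v$ is a $k$-Escher. For the trichotomy, I would first observe that each of $u_i\to v_{i+1}$ (vs.\ $u_i\succ v_{i+1}$) and $v_i\to u_{i+1}$ (vs.\ $v_i\succ u_{i+1}$) is a genuine dichotomy: by the trichotomy of relative positions recalled before Lemma \ref{funlemma}, the only way $u_i\to v_{i+1}$ fails is $u_i\succ v_{i+1}$, and symmetrically for the other. This gives four a priori cases, and the content is that the fourth — $u_i\succ v_{i+1}$ together with $v_i\succ u_{i+1}$ — is excluded. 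For that I would write down the logical sequence
\[
u_i\to u_{i+1}\prec v_i\to v_{i+1}\prec u_i,
\]
using that $u$ is an Escher (so $u_i\to u_{i+1}$), that $v$ is an Escher (so $v_i\to v_{i+1}$), and the two assumed $\succ$-relations rewritten as $\prec$ in the reverse direction. This is a length-$4$ logical cycle with two $\ra$-symbols and two $\prec$-symbols, hence impossible by Lemma \ref{funlemma} (here $r=s=2$, so $r\ge s$ applies). This proves the three possibilities are mutually exclusive and exhaustive.

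For the ``moreover'' clause, I would argue exactly as in the passage from Lemma \ref{kEschers} to Corollary \ref{purecor}. Suppose indices $i<j$ are both of type (1) and that between them some index is of type (2) and some is of type (3); then by discreteness there are consecutive indices $\ell,\ell+1$ strictly between $i$ and $j$ with, say, $\ell$ of type (3) and $\ell+1$ of type (2) (the other order being symmetric). Unpacking the definitions at $\ell$ and $\ell+1$ yields $u_\ell\to v_{\ell+1}$, $v_\ell\succ u_{\ell+1}$, $u_{\ell+1}\succ v_{\ell+2}$, $v_{\ell+1}\to u_{\ell+2}$, together with the Escher relations $u_\ell\to u_{\ell+1}\to u_{\ell+2}$ and $v_\ell\to v_{\ell+1}\to v_{\ell+2}$; assembling these into a short logical cycle and counting $\prec$- versus $\ra$-symbols produces an impossible sequence by Lemma \ref{funlemma}, the same way the ``last remaining possibility'' is killed in the proof of Corollary \ref{purecor}. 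This forces all indices strictly between two type-(1) indices to be of the same type.

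The only real subtlety I anticipate is bookkeeping: making sure the indices of $u$ and $v$ are interpreted in the correct cyclic groups ($\mathbb{Z}/n$ for $u$, $\mathbb{Z}/k$ for $v$) and that the short logical cycles I write down genuinely close up and genuinely have $r\ge s$ so that Lemma \ref{funlemma} applies — a miscount there would break the argument. Since the paper already states ``the proof is analogous'' for Lemma \ref{strengthened}, I would likewise keep this proof terse, writing out only the one impossible sequence for the trichotomy and the one impossible sequence for the ``moreover'' clause, and leaving the remaining symmetric case to the reader.
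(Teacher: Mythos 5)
Your proof is correct and is exactly the argument the paper intends: the paper omits this proof as ``similar to the one given above'' (i.e.\ to Lemma~\ref{kEschers}), and your two impossible cycles --- $u_i\to u_{i+1}\prec v_i\to v_{i+1}\prec u_i$ for the excluded fourth case, and the analogous length-four cycle through $v_\ell,v_{\ell+1},v_{\ell+2},u_{\ell+1}$ (resp.\ its $u$/$v$-swapped mirror) for the ``moreover'' clause --- each have $r=s=2$ and so are killed by Lemma~\ref{funlemma} just as in that proof. No gaps; the cyclic-index bookkeeping you flag is indeed the only thing to watch, and your cycles close up correctly.
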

\begin{proof}
	The proof is similar to the one given above and will be omitted.
\end{proof}
Let us assume for simplicity that $ n $ and $ k $ are relatively prime. Then 
the pair $ (u,v ) $ gives rise to a cyclic sequence of $ n\cdot k $ pairs in $ 
P^U_n\times P^U_k $, related by simultaneous rotations,  all reducing to the 
same pair of cyclic Eschers, but identified by their starting points at $ u_i $ 
and $ v_i $, respectively, $ i=0,1,\dots,nk-1 $. We will call a pair of 
subsequent indices $ \kappa=(i,i+1) $ a \textit{valid insertion} if the case 
(1) above holds. Visually, we will think of this as
\[  \begin{pmatrix}
\dots 	v_i \\ \dots u_i 
\end{pmatrix}    \overset{\kappa}\times
 \begin{pmatrix}
 u_{i+1} \dots \\	v_{i+1} \dots
\end{pmatrix}
 \]
 and write $ i<\kappa< i+1 $.

\section{The proof} \label{sec:proof}

	We fix a UIO $ U $ of $ N=n+k $ elements. If this causes no confusion, then we will omit the index "$ U $" from our notation.  Following the ideas of \cite{PSz}, we will prove Theorem \ref{thmmain} 
	statement by 
	constructing an explicit injection $ \phi= \phi_{nk}:P^U_{n+k}\hookrightarrow 
	P_{n}^U\times P_{k}^U$. 

 Again, we will think of an element $ w\in P^U_m $ as a cyclic Escher with a 
 marked starting point indexed by 0. In particular, all indices are understood 
 modulo $ N=n+k $. One subtlety in our conventions: even though the indices are 
 arranged cyclically, we will still talk about \textit{a smallest} index 
 satisfying a certain condition, \textit{starting at some point}. This will 
 mean starting from a point, and then moving forward until we reach the index 
 with the required property.

\textbf{The map $ \phi: P_{n+k}\to P_n\times P_k$.} Assume for simplicity $ n>k 
$.

Then $ \phi $ is defined as follows: let $ w\in P^U_{n+k} $; define $0\le L\le 
n+k-1 $ to be the smallest index $ l $, starting from 0, such that
\[ [w_{l+1},\dots,w_{l+k}] \]
is a valid sub-Escher, i.e. $ [w_{l+k+1},\dots,w_{l}] $ is also an Escher. We 
will denote this index $ L $ by $ FE(w) $ sometimes
 We 
thus ended up with two, well-defined cyclic Eschers. All is needed then is to 
fix their initial points. We define
\begin{itemize}
	\item we set initial point of the $ k $-Escher as the index $ q\in[L+1,L+k] 
	$ such that $ q=0\mod k $, and similarly,
	\item we set initial point of the $ n $-Escher as the index $ q\in[L+k+1,L] 
	$ such that $ q=0\mod n $.
\end{itemize}
\begin{remark}
	Even though we have this uniform definition, there two distinct situations:
	\begin{enumerate}
		\item  The \textit{ordinary} case: $ 0\notin[L+1,L+k] $.  

		\item The \textit{exceptional} case :  $ 0\in[L+1,L+k] $. 
	\end{enumerate}
	
\end{remark}

 \textbf{The "inverse map" $ \psi: P_n\times P_k \to 
P_{n+k} $.} Let us take $ v $ and $ u $, a $ k $- and $ n $-Escher, 
respectively. Let $ L $ be the smallest integer $ 0\le l $ such that there is a 
valid 
insertion after $ l $, i.e. $ v_l\to u_{l+1} $ and $ 
u_l\to v_{l+1} $ (if there is no such $ l $, then we define the map in an 
arbitrary fashion). We will denote this index $ L $ of first valid insertion by 
$ FI(u,v) $ sometimes. This gives us immediately a cyclic $ n+k $-Escher, 
and we just need pick a initial point.
\begin{enumerate}
	\item The \textit{ordinary} case: $ L<n $. Then we take $ u_0 $ to be the 
	initial point.
	\item The \textit{exceptional} case: $ L\ge n $. Then we take $ v_n $ 
	to be 
	the initial point.
\end{enumerate}
The following theorem is a refinement of Theorem \ref{thmmain}, and immediately 
implies the chromatic conjecture for 
partitions of length 2.
\begin{theorem}
	\[ \psi\circ\phi\text{ is the identity} \]
\end{theorem}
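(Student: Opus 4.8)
The plan is to fix an Escher $w\in P^U_{n+k}$, compute $(u,v)=\phi(w)$ explicitly, and trace the definition of $\psi$ to see it returns $w$. First I would unwind $\phi$. Put $L=FE(w)$. By construction the $k$-Escher $v$ is carried by the block $w_{L+1},\dots,w_{L+k}$ and the $n$-Escher $u$ by the block $w_{L+k+1},\dots,w_{L+k+n}=w_L$, each with the cyclic structure inherited from $w$; these are genuine cyclic Eschers because $L$ is, by definition, a valid sub-Escher position, so the two closing steps $w_{L+k}\to w_{L+1}$ and $w_L\to w_{L+k+1}$ hold. The base points are chosen so that the index of $v$ is congruent mod $k$, and the index of $u$ congruent mod $n$, to the position the element occupies inside its block. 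Under the standing simplifying assumption $\gcd(n,k)=1$ the simultaneous-rotation orbit of the pair $(u,v)$ is a single cycle of length $nk$; writing its elements as $l\in\Z/nk$, the Chinese Remainder Theorem produces a distinguished index $i^{*}=i^{*}(L)$ characterised by $u_{i^{*}}=w_L$ and $v_{i^{*}}=w_{L+k}$, and this is exactly the index at which the insertion operation of $\psi$ reassembles the blocks $w_{L+k+1},\dots,w_L$ and $w_{L+1},\dots,w_{L+k}$ into $w$ read cyclically.

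I would then check two things. (i) $i^{*}$ is a valid insertion of $(u,v)$: the two relations required by case~(1) of Lemma~\ref{insertionlemma} at $i^{*}$ unwind to $w_L\to w_{L+1}$ and $w_{L+k}\to w_{L+k+1}$, which are consecutive steps of the Escher $w$. (ii) No index $l$ with $0\le l<i^{*}$ is a valid insertion; this is the substantive point. The argument I have in mind translates a hypothetical valid insertion at such an $l$ back to a configuration inside $w$ — via the identification of $u_l,v_l,u_{l+1},v_{l+1}$ with elements of $U$ sitting at explicit positions of $w$ — and shows it would force Lemma~\ref{kEschers} into case~(1) at a position $m$ lying in the initial stretch $0\le m<L$. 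But $L=FE(w)$ means precisely that on this stretch only case~(2), or only case~(3), occurs — a \pum or \pup run, by Corollary~\ref{purecor} — so case~(1) is excluded, a contradiction. The reason $l<i^{*}$ translates to $m<L$ is that the congruence conventions on the base points of $u$ and $v$ are arranged so that $\psi$'s scan for the first valid insertion and $\phi$'s scan for the first valid sub-Escher traverse $w$ in step. Combining (i) and (ii) gives $FI(u,v)=i^{*}(L)$, hence $\psi(u,v)$ is the cyclic Escher underlying $w$. (Proposition~\ref{vsubEschers} guarantees a valid insertion, hence a genuine $i^{*}$, exists at all.)

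Finally I would match the marked points. The ordinary/exceptional dichotomy of $\psi$ ($FI(u,v)<n$ versus $\ge n$) corresponds, via $FI(u,v)=i^{*}(L)$ and the CRT description of $i^{*}$, to the dichotomy of $\phi$ ($0\notin[L+1,L+k]$ versus $0\in[L+1,L+k]$, i.e.\ whether the marked position $w_0$ lies in the $u$-block or the $v$-block). In the ordinary case $\psi$ returns the point $u_0$, which by the mod-$n$ convention is $w_0$; in the exceptional case it returns $v_n$, which by the mod-$k$ convention is $w_0$. Hence $\psi\circ\phi=\mathrm{id}$.

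I expect step (ii) to be the main obstacle, and within it the index arithmetic: proving rigorously that the (otherwise unmotivated) congruence conditions defining the base points of $u$ and $v$ force the equivalence $l<i^{*}\iff m<L$, so that the two notions of "first" literally coincide, and carrying the \pum versus \pup split — together with the exceptional case in the base-point matching, where one must also watch that the block $w_{L+1},\dots,w_{L+k}$ may contain position $0$ — through cleanly. The remaining ingredients (the trichotomy Lemmas~\ref{kEschers}, \ref{strengthened}, \ref{insertionlemma} and Corollary~\ref{purecor}) are already in hand, so the rest should be a direct, if fussy, unwinding.
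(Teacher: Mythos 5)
Your skeleton coincides with the paper's: identify the insertion index $i^{*}$ of the pair $(u,v)=\phi(w)$ that reassembles $w$ at $L=FE(w)$, show it is the \emph{first} valid insertion, and check that the base-point conventions return the marked point $w_0$. Step (i) and the base-point matching are fine. The gap is in step (ii), which is exactly where the paper concentrates all of its work. You propose to translate a hypothetical valid insertion at $l<i^{*}$ directly into an instance of case (1) of Lemma~\ref{kEschers} inside $w$ at some position $m<L$, ``via the identification of $u_l,v_l,u_{l+1},v_{l+1}$ with elements sitting at explicit positions of $w$''. But if $l=i^{*}-t$, then $u_l$ sits at position $L-t$ of $w$ while $v_l$ sits at position $L+k-(t\bmod k)$; the four elements occupy the positions $m,\,m+1,\,m+k,\,m+k+1$ demanded by case (1) only when $t<k$. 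As soon as the hypothetical earlier insertion is $k$ or more steps back, the $v$-coordinate has wrapped around the length-$k$ cycle and there is no single position $m$ of $w$ at which your two relations $u_l\to v_{l+1}$, $v_l\to u_{l+1}$ become the case-(1) relations. The direct translation you rely on does not exist in general, so your argument only excludes valid insertions within $k$ steps of $i^{*}$.

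The paper closes precisely this gap with Propositions~\ref{mainprop} and~\ref{excepprop}. Taking $\kappa_1<\kappa_2$ to be \emph{consecutive} valid insertions with $\kappa_2$ the reassembling one, the mixed stretch $u_j\to\dots\to u_{j+r}\to v_{j+r+1}\to\dots\to v_{j+r+k}$ becomes a genuine consecutive stretch of $w$ ending at $w_{L+k}$, and one shows it cannot be pure --- not by exhibiting a case-(1) position directly, but by assuming purity, invoking Corollary~\ref{purecor} to make the stretch uniformly pure$^-$ or pure$^+$, and then chaining the resulting $\prec$ and $\to$ relations around the cycle into a logical sequence with as many $\prec$ as $\to$ symbols, contradicting Lemma~\ref{funlemma}. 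This counting argument, together with its more delicate variant for the exceptional case $L\ge n$ (where the stretch traverses the $v$-block twice and the count becomes $q+1$ versus $q+1$), is the real content of the theorem and is absent from your plan.
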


Let us start with the ordinary case: $ w\in P_{n+k}$ and assume that the  
first $ k 
$-Escher occurs at $ FE(w)=L<n $. It is clear that the pair $ \phi(w)=(u,v) $ 
has a valid 
insertion right after the index $ L $, in principle, recovering $ w$. The 
danger is, 
however, that if $ FI(u,v)<L $, i.e. there are other valid insertions of the 
pair 
$ (u,v) $ before the index $ L $, then the map $ \psi $ will not take $ \phi(w) 
$ back to $ w $. We thus need to show that $ FI(\phi(w))\ge FE(w) $.
This is, essentially the content of the following statement:
\begin{proposition}\label{mainprop}
	Let $ \kappa_1 $ and $ \kappa_2 $ be two consecutive valid insertions of 
	the pair $ (u,v) $: 
	\[ j< \kappa_1< j+1,\dots,j+r<\kappa_2<j+r+1,\dots,\quad r\le n \] 
	Then the sequence 
	\[ u_j\to u_{j+1}\to\dots \to u_{j+r}\to v_{j+r+1}\to\dots \to v_{j+r+k} \]
	is not pure, i.e. has a valid $ k $-subEscher. 
\end{proposition}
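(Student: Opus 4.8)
We need to show: between two consecutive valid insertions $\kappa_1$ (after index $j$) and $\kappa_2$ (after index $j+r$), the "mixed" sequence $u_j\to u_{j+1}\to\dots\to u_{j+r}\to v_{j+r+1}\to\dots\to v_{j+r+k}$ contains a valid $k$-subEscher.

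Key structural facts to assemble:
- The sequence $[u_{j+1},\dots,u_{j+n}]$ is an $n$-Escher, so $u_j\to u_{j+1}\to\dots\to u_{j+n}\to u_j$ holds (in particular all the $\to$'s in the $u$-part of our mixed sequence are valid).
- Similarly $[v_{j+r+1},\dots,v_{j+r+k}]$ sits inside a $k$-Escher (cyclically), so all the $\to$'s in the $v$-part hold.
- The joining step $u_{j+r}\to v_{j+r+1}$ is exactly half of the valid-insertion condition at $\kappa_2$ (namely $u_{j+r}\to v_{j+r+1}$ and $v_{j+r}\to u_{j+r+1}$).
- Since $\kappa_1$ is valid: $v_j\to u_{j+1}$ and $u_j\to v_{j+1}$.
- Since the indices strictly between $j$ and $j+r$ are \emph{not} valid insertions, by Lemma \ref{insertionlemma} they are all of the same type — either all type (2) ($u_i\succ v_{i+1}$, $v_i\to u_{i+1}$) or all type (3) ($u_i\to v_{i+1}$, $v_i\succ u_{i+1}$) — for $i=j+1,\dots,j+r-1$.

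The plan is to split into these two cases and in each derive the existence of a valid $k$-subEscher by the same "pure sequence forces an impossible chain" mechanism used in Proposition \ref{vsubEschers}. Concretely: consider the mixed sequence above; by Lemma \ref{kEschers} applied at each sliding window of width $k$ inside it, each window is either a valid $k$-subEscher (done) or is of type (2)/(3). If \emph{no} window is a valid $k$-subEscher, then by Corollary \ref{purecor} the whole mixed sequence is pure of one definite sign, pure$^-$ or pure$^+$. A pure$^-$ mixed sequence of length $r+k+1 > k$ yields a logical chain of the form $\,\cdot\,\prec\,\cdot\,\prec\,\cdot\,\prec\dots$ with more $\prec$-symbols than $\to$-symbols (exactly as in the proof of Proposition \ref{vsubEschers}), hence is impossible by Lemma \ref{funlemma} — \emph{provided} we can close the chain up, i.e. show it wraps around using the extra relations coming from $\kappa_1$ and $\kappa_2$. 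The pure$^+$ case is symmetric. This contradiction forces some window to be a valid $k$-subEscher.

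The main obstacle — and the step requiring real care — is the wrap-around/closure. The mixed sequence is not itself cyclic (it mixes $u$'s and $v$'s), so one cannot directly invoke the closed-loop argument of Proposition \ref{vsubEschers}. One has to use the valid-insertion relations at $\kappa_1$ and $\kappa_2$, together with the type-(2)-or-(3) uniformity on the intermediate indices, to manufacture a genuine impossible \emph{closed} logical sequence out of the assumed purity. I would carry this out as follows: in the pure$^-$ case, the "$k$ apart" relation gives $v_{j+r+1}\prec u_{j+1}$-type links stepping back through the mixed sequence; combining the tail links in the $v$-part, the head links in the $u$-part, the junction $\kappa_2$, and the $\kappa_1$ relations $v_j\to u_{j+1}$, $u_j\to v_{j+1}$, one assembles a cycle whose count of $\prec$'s strictly exceeds its count of $\to$'s, contradicting Lemma \ref{funlemma}. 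Verifying that the symbol-count inequality $r_{\prec}\ge s_{\to}$ indeed holds after closure — tracking exactly which of the $r$ intermediate steps contribute a $\prec$ versus a $\to$ in each of the two cases — is the bookkeeping heart of the argument, but it is routine once the correct closed cycle is written down.
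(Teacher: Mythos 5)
Your plan is essentially the paper's own proof: assume no window is a valid $k$-subEscher, invoke Corollary \ref{purecor} to make the mixed sequence uniformly pure$^-$ or pure$^+$, and close an impossible logical chain (Lemma \ref{funlemma}) using the stepping-by-$k$ relations together with the cyclicity of $v$ (resp.\ $u$) and the $\kappa_1$ relation $v_j\to u_{j+1}$ (resp.\ $u_j\to v_{j+1}$). The one caveat is that the step you defer as ``routine bookkeeping'' is in fact the entire content of the paper's argument --- the explicit closed chain in the pure$^-$ case is $u_1\prec u_k\prec\dots\prec u_{(q-1)(k-1)+1}\prec v_{q(k-1)+1}\to\dots\to v_{qk}=v_0\to u_1$ with exactly $q$ symbols of each kind (note that equality of counts already suffices for Lemma \ref{funlemma}; you do not need the strict excess of $\prec$'s that your sketch asks for), and the pure$^+$ case is the mirror image.
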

\begin{proof}
	For the purposes of the proof, we can set $ j=0 $, as $ j $ simply 
	introduces a global shift in the arguments. Assume thus, ad absurdum that
	\[ u_0\to u_{1}\to\dots \to u_{r}\to v_{r+1}\to\dots \to v_{r+k} \]
	is pure. 
	
	Let us first consider the case when this sequence is \pum. Then by 
	definition of these \pum sequences, we have
	\[ u_1\prec u_k\prec\dots\prec u_{(q-1)(k-1)+1}\prec v_{q(k-1)+1}
	\to v_{q(k-1)+2} \to\dots\to v_{qk}=v_0\to u_1,
	\]
	which is an impossible sequence, because it has $ q $ $ \prec $ and $ q $ $ 
	\to $ symbols.
	If the sequence is \pup, then, on the other hand, by definition,
	\[ u_0\succ u_{k+1}\leftarrow u_k\succ\dots \leftarrow u_{(q-1)k}\succ 
	v_{qk+1}=v_1
	\leftarrow u_0, \]
	which is again an impossible sequence  with $ q $ $ \prec $ and $ q $ 
	$ \to $ symbols.
	
\end{proof}
We have thus shown that $ \psi\circ\phi(w)=w $ when $ 
FE(w)=L<n $.

The exceptional case is similar, but requires a little more care. Again, it is 
easy to see, that given an exceptional $ w\in P_{n+k}^W $, the pair $ 
(u,v)=\phi(w) $ has a valid insertion at $ L=FE(w)\ge n $, and the following 
statement will suffice.
\begin{proposition}\label{excepprop}
	Let $ \kappa_1 $ and $ \kappa_2 $ be two consecutive valid insertions of 
	the pair $ (u,v) $: 
	\[ 0< \kappa_1< 1,\dots,L<\kappa_2<L+1,\dots,\quad\text{ where } L\ge n. \] 
	Then the sequence 
	\[ v_n\to v_{n+1}\to\dots \to v_L\to u_{L-n+1}\to \dots \to u_L\to 
	v_{L+1}\to\dots  \to v_{L+k} \]
	is not pure, i.e. has a valid $ k $-subEscher. 
\end{proposition}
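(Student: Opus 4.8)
The plan is to argue by contradiction, in close analogy with the proof of Proposition~\ref{mainprop}. Write the displayed sequence as $w_0\to w_1\to\dots\to w_{L+k}$ with $w_0=v_n$, and suppose it is pure. By Corollary~\ref{purecor} it is then either \pum or \pup; I would describe the \pum case, the other being symmetric once case~(3) of Lemma~\ref{kEschers} is used in place of case~(2). First I would dispose of the borderline sub-case $L=n$: there the $v$-prefix degenerates to the single term $v_n$, the sequence reads $v_n\to u_1\to\dots\to u_n\to v_{n+1}\to\dots\to v_{n+k}$, and the argument of Proposition~\ref{mainprop} applies with only cosmetic changes — the $\prec$-chain lands on $u_1=w_1$ and is closed by the relation $v_0\to u_1$ coming from $\kappa_1$. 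So from now on I assume $L>n$, hence $1\le L-n\le k-1$.

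In the \pum case, case~(2) of Lemma~\ref{kEschers} holds for every admissible $m$, furnishing the two families $w_{m+1}\prec w_{m+k}$ and $w_m\to w_{m+k+1}$. As in Proposition~\ref{mainprop} I would iterate the first family along the arithmetic progression of indices with common difference $k-1$, producing a chain $w_1\prec w_k\prec w_{2k-1}\prec\cdots$ of $\prec$-relations, recording for each term which of the three blocks — the $v$-prefix $w_0,\dots,w_{L-n}$, the full $n$-Escher block $w_{L-n+1},\dots,w_L$, or the full $k$-Escher block $w_{L+1},\dots,w_{L+k}$ — its index lies in; here both block-transitions of the sequence are supplied by the single valid insertion $\kappa_2$, whereas $\kappa_1$ is still unused. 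Once the chain first leaves the range $[1,L]$ it lands in the third block; from there I would continue it by the Escher relations of $v$ and then close it back onto $w_1$ using the Escher relations of $u$ together with the relation $v_0\to u_1$ (or $u_0\to v_1$) of the valid insertion $\kappa_1$. Finally, a symbol count should show that the resulting cyclic logical sequence contains at least as many $\prec$-symbols as $\to$-symbols, which contradicts Lemma~\ref{funlemma}.

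The step I expect to be the genuine obstacle — and the reason the exceptional case ``requires a little more care'' than Proposition~\ref{mainprop} — is precisely this closing arc. In the ordinary case the $\prec$-chain leaves the $u$-block, crosses once into the $v$-block, and is closed in a single $\kappa_1$-step; here it can overshoot through the third block, so one must route it back, possibly re-entering the $u$-block, and it is not a priori clear that the $\prec$-symbols produced still dominate the $\to$-symbols spent on the closure. Making the count balance out will, I expect, force one to use not only the purity relations but also the type-(2) (resp.\ type-(3)) relations $u_i\succ v_{i+1}$, $v_i\to u_{i+1}$ valid for all $1\le i\le L$ — these hold because $\kappa_1$ and $\kappa_2$ are \emph{consecutive} valid insertions (Lemma~\ref{insertionlemma}) — and to keep careful track of the wrap-arounds of indices modulo $n$ in the $u$-block and modulo $k$ in the two $v$-blocks. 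Once the bookkeeping is arranged so that each productive $\prec$-step is matched by exactly one Escher step in the closure, the count comes out even and Lemma~\ref{funlemma} applies; the \pup case, and the identity $\psi\circ\phi=\mathrm{id}$ in the exceptional case, then follow exactly as in the ordinary case.
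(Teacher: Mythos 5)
Your overall strategy coincides with the paper's: argue by contradiction, use Corollary~\ref{purecor} to split into the pure$^-$ and pure$^+$ cases, let $L=n$ degenerate to Proposition~\ref{mainprop}, build the $\prec$-chain $w_1\prec w_k\prec w_{2k-1}\prec\cdots$ from the purity relations, close it into a cyclic logical sequence, and contradict Lemma~\ref{funlemma} by counting symbols. But the step you yourself flag as ``the genuine obstacle'' --- the closing arc and the verification that the count balances --- is left unresolved (``it is not a priori clear that the $\prec$-symbols produced still dominate the $\to$-symbols spent on the closure''). That is a genuine gap: everything you do carry out is already contained in the proof of Proposition~\ref{mainprop}, and the closing arc is exactly what distinguishes the exceptional case and constitutes the actual content of this proposition.

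Moreover, your guesses about what the closure will require point in the wrong direction: it needs neither the Escher relations of $u$ nor the type-(2)/(3) relations of Lemma~\ref{insertionlemma} for all $1\le i\le L$. In the pure$^-$ case with $L>n$, the chain of $q$ symbols $\prec$ ends at $v_{q(k-1)+1}$ with $q(k-1)+1>L$, i.e.\ in the trailing $v$-block; one then takes $q-1$ Escher steps of $v$ down to $v_{qk}=v_0$, uses the single relation $v_0\to u_1$ supplied by $\kappa_1$, observes that $u_1=u_{n+1}=w_{n+1}$ and applies one more purity relation $w_{n+1}\prec w_{n+k}$, i.e.\ $u_{n+1}\prec v_{n+k}=v_n=w_0$ (the inequalities $n<L\le n+k-1$ guarantee that $w_{n+1}$ lies in the $u$-block and $w_{n+k}$ in the trailing $v$-block), and finally closes with $w_0\to w_1$. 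The cycle then has $q+1$ symbols $\prec$ against $q+1$ symbols $\to$, which is impossible by Lemma~\ref{funlemma}; the pure$^+$ case is settled by the analogous explicit cycle. So the accounting does come out even, but only once this specific closing arc is exhibited --- which your proposal does not do.
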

Note that we have set the starting index $ j=0 $, as in the proof of the case 
above.
\begin{proof}
	Assuming the sequence is \pum and $ L>n $, we obtain the following sequence:
	\begin{multline*}
		 v_{n+1}\prec u_k\prec\dots\prec u_{(q-1)(k-1)+1}\prec v_{q(k-1)+1}
	 \to\dots\\
	 \dots\to v_{qk}=v_0\to u_1=u_{n+1}\prec v_{n+k}=v_n\to v_{n+1},	
	\end{multline*}
which has with $ q+1 $ $ \prec $ and $ q+1 $ 
$ \to $ symbols.
When $ L=n $, the sequence degenerates to the one used in the proof of 
Proposition \ref{mainprop}.

Assuming the sequence is \pup, leads to the impossible sequence
\[ v_n\succ u_{k+1}\leftarrow u_k\succ\dots \leftarrow u_{(q-1)k}\succ 
v_{qk+1}=v_1
\leftarrow u_0 \succ v_{n+k+1}\leftarrow v_n, \]
and this completes the proof.
\end{proof}

\end{document}